\documentclass[11pt]{article}
\usepackage[a4paper,right = 22mm, left=22mm,
 top=25mm, bottom = 25mm]{geometry}
\usepackage{graphicx} 
\usepackage{amsmath}
\usepackage{amsthm}
\usepackage{graphicx}
\usepackage{xcolor}
\usepackage{algorithm}
\usepackage{algpseudocode}
\usepackage{amsfonts}
\usepackage{caption}
\usepackage{subcaption}
\usepackage{lipsum}
\usepackage{authblk}

\newtheorem{lemma}{Lemma}
\newtheorem{proposition}{Proposition}

\numberwithin{equation}{section}
\newtheorem{corollary}{Corollary}
\usepackage{url} 
\usepackage{enumitem}
\setlist{noitemsep}
\input{abbrev.tex}
\RequirePackage{hyperref}
\title{Flexible inner-product free Krylov methods for inverse problems}
\author[1]{Malena Sabaté Landman}
\affil[1]{Mathematical Institute, Oxford, \textit{malena.sabatelandman@maths.ox.ac.uk}}

\date{}

\begin{document}

\maketitle
\begin{abstract}
Flexible Krylov methods are a common standpoint for inverse problems. In particular, they are used to address the challenges associated with explicit variational regularization when it goes beyond the two-norm, for example involving an $\ell_p$ norm for $0<p\leq 1$. Moreover, inner-product free Krylov methods have been revisited in the context of ill-posed problems to speed up computations and improve memory requirements by means of using low precision arithmetics. However, these are effectively quasi-minimal residual methods, and can be used in combination with tools form randomized numerical linear algebra to improve the quality of the results.

This work presents new flexible and inner-product free Krylov methods, including a new flexible generalized Hessenberg method for iteration-dependent preconditioning. Moreover, it introduces new randomized versions of the methods, based on the sketch-and-solve framework. Theoretical considerations are given and numerical experiments are provided for different variational regularization terms to show the performance of the new methods. \\

\noindent \textbf{keywords}: flexible Krylov methods, inner-product free methods, randomized Krylov methods, inverse problems
\end{abstract}

\section{Introduction}\label{sec1}
Large-scale inverse problems appearing in many applications, such as imaging, signal processing, and related fields, are often solved by iterative methods. 
It is often the case that the algorithms' efficiency is limited by memory constraints and the cost of linear algebra operations, and can conversely be boosted by exploiting capabilities in modern hardware such as low precision arithmetic. This work introduces a new class of flexible, inner-product free Krylov methods that offer a balanced alternative to traditional approaches. These methods are naturally well-suited for parallelization, reduce the costs of re-orthogonalization, and have demonstrated strong performance in low-precision arithmetic.

More specifically, this paper concerns large-scale linear discrete inverse problems where the system matrix $\bfA\in \mathbb{R}^{m \times n}$ is known, and the right hand side $\bfb$ is corrupted by white Gaussian noise $\bfe$. In many applications, for example computed tomography (CT) or image and signal deblurring, the resulting linear system is ill-posed. This means that the solution might not exist due to the noise in the measurements (which is very likely in overdetermined problems) or that the solution might not be unique (always true for underdetermined problems without aditional information on the solution). Moreover, and more crucially, large changes in the  solution can be caused by small changes in the measurements $\bfb$, leading to potential noise amplification. In particular, the solution of the least-squares problem 
\begin{equation}\label{eq_LS}
   \min \| \bfA \bfx - \bfb \|_2^2,
\end{equation}
is usually very different from the exact solution, or the solution of the noiseless system, which we call $\bfx_{\text{true}}$. 

The idea of regularization, in its more wider meaning, corresponds to adding some known information of the solution into the problem formulation in order to obtain an approximation that is closer to $\bfx_{\text{true}}$ than the solution of \eqref{eq_LS}. This information can be implicit, for example, most iterative methods including Krylov methods applied to \eqref{eq_LS} have a regularizing effect if they are used in combination with early stopping. However, they display something called `semiconvergence' once the noise starts to dominate the solution as it converges to the minimizer of \eqref{eq_LS}. Moreover, there are cases where explicit properties of the solution are known in advance. For example, smoothness, sparsity, similarity to previous reconstructions or a template, etc. A very powerful framework to include this information in the problem is to consider variational regularization and solve
\begin{equation}\label{eq:var}
   \min_\bfx \left\{\| \bfA \bfx - \bfb \|_2^2 + \lambda R(\bfx)\right\},
\end{equation}
where the choice of the regularization term $R(\bfx)$ is such that unwanted properties are penalized, and the regularization parameter $\lambda$ balances the weight that one gives to the prior information with respect to the data fit.

The most used variational functional is that corresponding to Tikhonov regularization, i.e. $R(\bfx)=\|\bfx\|_2^2$, which promotes smoothness in the solution. In this case, an analytic expression for the solution exists. Since we are considering large-scale problems, we cannot typically assume that full factorizations or a direct (possibly `pseudo-') inverse of the system matrix $\bfA$ are available. In this case, hybrid Krylov subspace methods are a very efficient class of methods to find approximate solutions to \eqref{eq:var}, which project the original problem into Krylov subspaces of increasing dimension, so that only a projected problem is solved at each iteration. The advantages of these methods are as follows: (1) they only require one matrix-vector product with $\bfA$ (and possibly $\bfA\t$) per iteration, (2) the approximated solutions approach $\bfx_{\text{true}}$ in only a few iterations, (3) the regularization parameter can be efficiently computed on-the-fly at each iteration for the small projected problems. For a review on these methods, see \cite{Chung2024survey}.

In the last years, there has been a constant effort to reproduce the advantages of hybrid methods for more complex variational terms, which often appear in different applications. Here are some examples. When dealing with a sparse solution, one might want to consider $R(\bfx)=\|\bfx\|_p^p$, for $0 <p\leq 1$, or its convex relaxation $R(\bfx)=\|\bfx\|_1^1$, maybe after a transformation $R(\bfx)=\|\bfL(\bfx)\|_1^1$, e.g. \cite{Fornasier2010CS}. When dealing with signals (or images) with sharp edges, one might want to consider total variation (TV) regularization, i.e. $R(\bfx)=TV(\bfx)$ \cite{rudin1992nonlinear}. In dynamical data, where the solution has both temporal and spatial dimensions, one might want to regularize these differently, for example using group sparsity, i.e. $R(\bfx)=\|\bfx\|_{2,1}$. Moreover, application-specific regularizers might be beneficial in particular contexts such as the prior-image-registered penalized-likelihood estimation (PIPLE) \cite{Stayman2013PIRPLE} or the prior image constrained compressed sensing ({PICCS}) functionals in CT procedures with repeated scans \cite{hatamikia2023source,hastings2025PICCS}.

Even if specific nonlinear optimization methods have been developed for different functionals, a very common practice to deal with such advanced regularizers is to rely on quadratic majorizers of the original functional instead. These can then be (partially) solved using Krylov subspace methods, leading to a descent step in the original functional. Traditionally, a new Krylov subspace is built for each subproblem, leading to inner-outer schemes of iterations, which are not ideal for computational efficiency. One has to mention, however, that these methods rely on short recurrences (with some caveats for varying regularization parameters), so they are very memory efficient.

More recently, different alternatives that avoid the inner-outer scheme have appeared, which are generally based on building a single subspace of increasing dimension for the solution, and solving a small projected problem at each iteration. The methods differ in the chosen solution subspaces, the most commonly used being generalized Krylov subspaces \cite{Lanza2015GKS,Huang2017MM} and flexible Krylov subspaces \cite{Gazzola2014GAT, Chung2019lp, Gazzola2021IRW, Gazzola2021Edge}. In particular, in this paper, we focus on flexible Krylov subspace methods. Note that, in general, a drawback of such methods is that they require a large storage overhead compared to the original inner-outer schemes. For this reason, they are also sometimes used in combination with restarts \cite{Buccini2023restart, onisk2025restarts}, with a small delay in convergence. We note that, alternatively, the memory requirements can be mitigated using low precision. In CT applications, even a two-fold increase in the amount of basis vectors that can be stored (in fast memory) can make a big difference in the convergence. \\

\fbox{\begin{minipage}{0.95\textwidth} \textbf{Novelty and relevance}
\noindent This paper introduces new flexible and inner-product free Krylov methods with iteration-dependent preconditioning, as well as randomized versions in the sketch-and-solve framework. In particular, this work includes the following novelties:
\begin{itemize}[topsep=0pt]
\item a new flexible generalized Hessenberg method for general rectangular matrices,
\item a new application of the flexible CMRH (FCMRH) method to approximate solutions of least-squares problems with an added variational regularization term,
\item a new flexible inner-product free Krylov method, named flexible LSLU (FLSLU), 
\item two sketch-and-solve (S\&S-) flexible inner-product free Krylov methods, namely (S\&S~-~FCMRH and S\&S-FLSLU)
\item for all the above methods, new variants including different regularization terms in the projected problem, namely hybrid (H-) and iteratively-reweighted (IRW-) methods,
\item for each of the proposed methods, theoretical results on sufficient conditions to guarantee  monotonicity of the objective function.
\end{itemize}
This paper is centered in the context of inverse problems, where a very general class of variational regularization problems is considered (e.g. $\ell_1$, total variation, group sparsity). The new presented methods are particularly relevant to address the trade off between efficiency in terms of computational cost and memory requirements, since they are a suitable flexible alternative for low precision. Moreover, they  allow for the paralellization of some operations since they avoid inner-products, which require global communication.
\end{minipage}} \\ \\

\noindent\textbf{Notation:} For a vector $\bfv$, the corresponding $i^{\text{th}}$ component is denoted as $\bfv(i)$. For a matrix $\bfB$, the component in the $i^{\text{th}}$ row and $j^{\text{th}}$ column is denoted as $\bfB(i,j)$, and the $i^{\text{th}}$ column is denoted as $\bfb_i$. We denote the $i^{\text{th}}$ canonical vector as $\bfe_i$. For a matrix $\bfB$, we denote its $i^{\text{th}}$ singular value as $\sigma_i(\bfB)$.

\section{Iteratively-reweighted norms}\label{sec2}
This work is concerned with the optimization of linear least-square problems with an added variational regularization term of the form \eqref{eq:var}. The choice of $R(\bfx)$ is very general, the only required assumption being that it can be written in the following form
\begin{equation}\label{eq:functional}
\displaystyle \min_\bfx \underbrace{\left\{\| \bfA \bfx - \bfb \|_2^2 + \lambda \|\bfW(\bfL \bfx) \bfL \bfx \|_2^2\right\}}_{f(\bfx)},
\end{equation}
where $\bfW(\bfL \bfx)$ is a square diagonal matrix of weights that depends on the solution $\bfx$, $\bfL\in \mathbb{R}^{d\times n}$ is the regularization matrix, and $\mathcal{N}(\bfA) \cap \mathcal{N}(\bfL) = \{\bf0\}.$ Note that, if the original regularization term is not differentiable everywhere (for example, when concerning an $\ell_1$ norm valued at vectors with components equal to zero), we usually incorporate a smoothing parameter in the definition of the weights. In this case, the functional in \eqref{eq:functional} is a smoothed version of the original one in \eqref{eq:var}.

A quadratic tangent majorant of a functional $f:\mathbb{R}^n\rightarrow\mathbb{R}$ at a point $\bar\bfx$, is a quadratic function $g:\mathbb{R}^n\rightarrow\mathbb{R}$ that fulfills: (1) $g(\bfx)\geq f(\bfx) \,\forall \bfx \in \mathbb{R}^n$, (2) $g(\bar\bfx) = f(\bar\bfx)$, and (3) $\nabla g(\bar\bfx) = \nabla f(\bar\bfx)$. Ignoring constant terms that do not affect the minimization, and multiplicative terms that are incorporated in the regularization parameter with a slight abuse of notation, the functional in 
\begin{equation}\label{eq:MM}
   \min_\bfx \left\{\| \bfA \bfx - \bfb \|_2^2 + \lambda \|\bfW_k \bfL \bfx \|_2^2\right\}, \quad \bfW_k =\bfW(\bfL \bar\bfx),
\end{equation}
is a quadratic tangent majorant of \eqref{eq:functional} at $\bar \bfx$. If $\bfL \in \mathbb{R}^{n \times n}$ is invertible, the minimization in \eqref{eq:MM} can be equivalently written as 
\begin{equation}\label{eq:MM_std}
   \min_\bfs \left\{\| \bfA (\bfW_k \bfL)^{-1} \bfs - \bfb \|_2^2 + \lambda \| \bfs \|_2^2\right\}, \quad \bfx = (\bfW_k \bfL)^{-1} \bfs,
\end{equation}
where $\bfW_k$ is defined as in \eqref{eq:MM}. Note that, in \eqref{eq:MM_std}, the weights $\bfW_k$ and the regularization matrix $\bfL$, appear as right preconditioners for the original least-squares problems. Moreover, since $\bfW_k$ can change at each iteration, we say that these preconditioners are iteration-dependent. 

A similar discussion can be had for the case where $\bfL \in \mathbb{R}^{n_{L} \times n}$ is not invertible. Consider a given matrix $\bfB$ and let $\bfK$ be a matrix whose columns span the null space of $\bfB$. Then, the A-weighted pseudo-inverse of $\bfB$ is defined as 
\begin{equation}\label{eq:Apseudo}
    \bfB ^{\dagger}_{\bfA} = (\bfI - \bfK(\bfA\bfK)^{\dagger}\bfA) \bfB ^{\dagger} =\bfE\bfB ^{\dagger}, \quad \text{for}  \quad \mathcal{N}(\bfB)= \mathcal{R}(\bfK).
\end{equation}
Using the definition of the A-weighted pseudo-inverse, the minimization in \eqref{eq:MM} can be equivalently written as 
\begin{equation}\label{eq:MM_w_std} 
   \min_\bfs \left\{\| \bfA (\bfW_k \bfL)^{\dagger}_{\bfA} \bfs - (\bfb-\bfA\bfx_A) \|_2^2 + \lambda \| \bfs \|_2^2\right\}, \quad \bfx = (\bfW_k \bfL)^{\dagger}_{\bfA} \bfs + \bfx_A,
\end{equation}
where $\bfx_A$ corresponds to the part of the solution in the null space of $\bfW_k \bfL$. In this case, $(\bfW_k \bfL)^{\dagger}_{\bfA}$ can be considered as an iteration-dependent right preconditioner of the original problem. Note that, even when $\bfA$ is square, working with \eqref{eq:MM_w_std} directly leads to a preconditioned system that is not square, so in order to use methods based on the Arnoldi or the Hessenberg processes, as explained in Section \ref{sec:FH}, one needs to modify problem \eqref{eq:MM_w_std} further. Following \cite{hansen2007smoothing, Sabate2019TV}, we can consider the following augmented system
\[
\bfA [(\bfW_k\bfL)^{\dagger}_{\bfA}, \bfK ]\begin{bmatrix}
\bfs \\ \bfs_{A}
\end{bmatrix} = \bfb, \quad \text{for} \quad \bfx_A=\bfK \bfs_A,
\]
where $\bfK$ is defined in \eqref{eq:Apseudo}, 
and apply left preconditioning $[\bfL^{\dagger}, \bfK]^{T}$. Eliminating $\bfS_A$ from the previous system, we obtain the following Schur complement:
\begin{equation}\label{eq_F_square} 
    (\bfL^\dagger)^T \bfP  \bfA (\bfW_k \bfL)^{\dagger}_{\bfA} \bfs =(\bfL^\dagger)^T \bfP \bfb, \quad \text{for}\quad \bfP =\bfI-\bfA\bfK(\bfK^T\bfA \bfK)^{-1} \bfK^T \in \mathbb{R}^{n\times b},
\end{equation}
where $\bfP$ is the oblique projector onto the orthogonal complement of $\mathcal{R}(\bfK)=\mathcal{N}(\bfL)$ along $\mathcal{R}(\bfA\bfK)$.

Summarizing, for any variational problem of the form~\eqref{eq:var} which can be written in the form~\eqref{eq:functional} –where we might have added some additional smoothing–, and given a solution $\bar\bfx$,  we can write a minimization problem involving a quadratic tangent majorant of \eqref{eq:functional} at $\bar\bfx$ of the form \eqref{eq:MM}. Starting from an initial solution $\bfx_0$, this can be used to build a sequence of minimization problems corresponding to the quadratic tangent majorants of  \eqref{eq:functional} at each $\bfx_{k-1}$, which can then be solved using an iterative solver to obtain $\bfx_{k}$.  

Alternatively, one can consider the characterizations \eqref{eq:MM_std} or  \eqref{eq:MM_w_std}, and use methods which allow for flexible preconditioning, e.g. flexible Krylov methods. Note that this type of preconditioning is used to incorporate prior information in the solution space, rather than accelerating convergence, see e.g.  \cite{calvetti2005priorconditioners}.

\section{Flexible inner-product free Krylov subspace methods}\label{sec3}

In this section we revisit the flexible Hessenberg method and introduce a new generalized Hessenberg method, both of which allow for right iteration-dependent preconditioning. Moreover, we introduce a new framework of flexible solvers that approximate solutions of problems with variational regularization, using the aforementioned flexible Hessenberg factorizations, with the iteration-dependent preconditioners defined in \eqref{eq:MM_std} or \eqref{eq:MM_w_std}, and more generally motivated in Section \ref{sec2}.
 
\subsection{Flexible Hessenberg method}\label{sec:FH}
The construction of the Hessenberg basis has been known for a long time \cite{Hessenberg1940}, for more details on the implementation see \cite{sadok1999new}, including flexible variants when using an inner solver as a preconditioner \cite{Zhang2013CMRHflexible,Gu2020FCMRH}. 

In this subsection, we recall the flexible Hessenberg methods, in e.g., \cite{Zhang2013CMRHflexible}. To do this, assume that we have the following iteration-dependent preconditioners $\{\bfP_{k}\}_{k=1,\dots,k_{\max}+1}$, all in $\mathbb{R}^{n \times n}$. In practice, however, these depend on intermediate solutions of an iterative solver and are usually computed as the iterations proceed. If we apply the flexible Hessenberg method (with pivoting) to $\bfA \in \mathbb{R}^{n \times n}$ and $\bfb$ as described in Algorithm \ref{alg:Hessenberg}, given the sequence of preconditioners and $\bfx_0$, we obtain a factorization of the form:
\begin{equation}\label{eq:Hessenberg_factorization}
    \bfA \, \bfZ^{H}_{k} = \bfV^{H}_{k+1}  \, \bfH^{H}_{k+1,k}, \quad \bfZ^{H}_{k} = [\bfP_1 \bfv^{H}_1,\dots \bfP_{k+1}\bfv^{H}_{k+1}] 
\end{equation}
where the superscripts in Algorithm \ref{alg:Hessenberg} are omitted to simplify the notation, as well as the under-script in the iteration number $k_{\max}$ in \eqref{eq:Hessenberg_factorization}. Here, $\bfZ^{H}_{k}\in\mathbb{R}^{n\times k}$, $\bfV^{H}_{k+1}=[\bfv^{H}_1,\dots \bfv^{H}_{k+1}]\in\mathbb{R}^{n\times k+1}$ and $\bfH^{H}_{k+1,k}$ is upper Hessenberg.
Note that partial pivoting on the rows of $\bfV^{H}_{k+1}$ is recommended to improve  stability. If $\bfPi_k$ is the pivoting matrix, so that each of its columns is the canonical vector corresponding to the pivot, i.e. $\bfpi_i=\bfe_{\bfp(i)}$, for $\bfp$ defined in Algorithm \ref{alg:Hessenberg} (lines 1, 4 and 13), then $\bfPi_k \bfV^{H}_{k+1}$ is unit lower triangular. Note that, however, $\bfZ^{H}_{k}$ does not have any particular structure.

\begin{algorithm}[ht]
\caption{Flexible Hessenberg process with pivoting} \label{alg:Hessenberg}
\hspace*{\algorithmicindent} \textbf{Input}: $\bfA$, $\bfb$, $\bfx_0$, $k_{\max}$, $\{\bfP_{k}\}_{k=1,\dots,k_{\max}+1}$\\
\hspace*{\algorithmicindent} \textbf{Output}: $\bfZ_{k_{\max}}$, $\bfV_{k_{\max}+1}$, $\bfH_{k_{\max}+1,k_{\max}}$
\begin{algorithmic}[1]
\State Initialize $\bfp = [1, \dots, n]^{T}$
\State $\bfr_0=\bfb - \bfA\bfx_0$ 
\State Find $i$ such that $|\bfr_0(i)| = \|\bfr_0\|_{\infty}$
\State $\beta = \bfr_0(i)$; $\bfp(1) \Leftrightarrow \bfp(i)$ $^*$
\State $ \bfv_1 = \bfr_0/ \beta$; $ \bfz_1 = \bfP_1 \bfv_1$
\For{$k = 1,\dots, k_{\max}$}
\State $\bfv = \bfA \bfz_k$
\For{$j = 1,\dots,k$} 
\State $\bfH_{k+1,k}(j,k) = \bfv(\bfp(j))$ ; $\bfv = \bfv - \bfH_{k+1,k}(j,k) \bfv_j$
\EndFor
\If{$k < n$ and $\bfu \neq \bf0$}
\State Find $i \in \{k+1, \dots, n\}$ s.t. $|\bfv(\bfp(i))| = \| \bfv(\bfp(k+1:n)) \|_{\infty}$
\State $\bfH_{k+1,k}(k+1,k) = \bfu(\bfp(i))$; $\bfp(k+1) \Leftrightarrow \bfp(i)$ $^*$
\State $\bfv_{k+1} = \bfv/ \bfH_{k+1,k}(k+1,k) $; $ \bfz_{k+1} = \bfP_{k+1} \bfv_{k+1}$
\Else
\State $\bfH_{k+1,k}(k+1,k) = 0$; Stop.
\EndIf
\EndFor
\end{algorithmic} 
\vspace{-3pt}
\begin{footnotesize}
* Here, $\bfa_i \Leftrightarrow \bfa_i$ denotes swapping the elements in positions $i$ and $j$ in the vector $\bfa$.
\end{footnotesize}
\end{algorithm} 

\subsection{Generalized Flexible Hessenberg method}
A more general method which allows for non-square matrices $\bfA\in\mathbb{R}^{m\times n}$ is the generalized Hessenberg method \cite{brown2025hlslu}. Similarly to Section \ref{sec:FH}, we introduce a new flexible variant. As before, assume that $\{\bfP_{k}\}_{k=1,\dots,k_{\max}+1}$, all in  $\mathbb{R}^{n \times n_L}$, are iteration-dependent preconditioners and, given $\bfb$ and $\bfx_0$, apply the generalized flexible Hessenberg method (with pivoting) as described in Algorithm \ref{alg:gen_hessenberg}. Then, we obtain a factorization of the form:
\begin{eqnarray}\label{eq:Gen_Hessenberg_factorization}
    \bfA \, \bfZ^{GH}_{k} &=& \bfU^{GH}_{k+1}  \, \bfH^{GH}_{k+1,k}, , \quad \bfZ^{GH}_{k} = [\bfP_1 \bfv^{GH}_1,\dots \bfP_{k+1}\bfv^{GH}_{k+1}]  \nonumber \\
    \bfA^{T} \, \bfU^{GH}_{k+1} &=& \bfV^{GH}_{k+1}  \, \bfT^{GH}_{k+1},
\end{eqnarray}
where the superscripts in Algorithm \ref{alg:gen_hessenberg} are omitted to simplify the notation, $\bfH^{GH}_{k+1,k} \in \mathbb{R}^{k+1 \times k}$ is upper Hessenberg and $\bfT^{GH}_{k+1}\in \mathbb{R}^{k+1 \times k+1}$ is upper triangular. Note that partial pivoting on the rows of both $\bfV^{GH}_{k+1}$ and $\bfU^{GH}_{k+1}$ is performed to ensure  stability.  If $\bfPi^{\bfV}_k$ and  $\bfPi^{\bfU}_k$ are the respective pivoting matrix, so that each of their columns is the canonical vector corresponding to the pivot, i.e. $\bfpi^{\bfV}_i=\bfe_{\bfg(i)}$ and $\bfpi^{\bfU}_i=\bfe_{\bfq(i)}$, then $\bfPi^{\bfV}_k \bfV^{GH}_{k+1}$ and $\bfPi^{\bfU}_k \bfU^{GH}_{k+1}$ are unit lower triangular. In this case, the vectors of pivots $\bfg$ and $\bfq$ are defined in Algorithm \ref{alg:gen_hessenberg} lines 1,5 and 25 and 1,3 and 14, respectively. Note that $\bfZ^{GH}_{k}$ does not have any particular structure.

\begin{algorithm}[ht]
\caption{Flexible generalized Hessenberg process with pivoting} \label{alg:gen_hessenberg}
\hspace*{\algorithmicindent} \textbf{Input}: $\bfA$, $\bfb$, $\bfx_0$, $k_{\max}$, $\{\bfP_{k}\}_{i=k,\dots,k_{\max}+1}$\\
\hspace*{\algorithmicindent} \textbf{Output}: $\bfZ_{k_{\max}}$, $\bfV_{k_{\max}+1}$, $\bfU_{k_{\max}+1}$, $\bfH_{k_{\max}+1,k_{\max}}$, $\bfT_{k_{\max}+1}$
\begin{algorithmic}[1]
\State Initialize $\bfq = [1,\dots,m]^T$, $\bfg = [1,\dots,n]^T $
\State $\bfr_0 = \bfb - \bfA \bfx_0$, find $i$ such that $|\bfr_0(i)| = \|\bfr_0\|_{\infty}$
\State $\beta = \bfr_0(i)$; $\bfu_1 = \bfr_0/ \beta$; $\bfq(1) \Leftrightarrow \bfq(i)$ $^{*}$
\State  $\bfv= \bfA^T \bfu_1$, find $i$ such that $|\bfv(i)| = \|\bfv\|_{\infty}$
\State $\bfT_{k+1}(1,1) = \bfv(i)$; $\bfg(1) \Leftrightarrow \bfg(i)$ $^{*}$
\State $\bfv_{1} = \bfv/\bfT_{k+1}(1,1)$; $\bfz_{1} = \bfP_1 \bfv_1$
\For{$k = 1,\ldots,k_{\max}$} 
    \State $\bfu = \bfA \bfz_k$
    \For{$j = 1, \ldots,k$}
        \State $\bfH_{k+1,k}(j,k) = \bfu(\bfq(j))$; $\bfu = \bfu-\bfH_{k+1,k}(j,k)\bfu_j$
    \EndFor
    \If{$k<m$ and $\bfu \neq \bf0$}
        \State Find $i \in \{ k+1,\dots,m\}$ s.t. $|\bfu(\bfq(i))| \hspace{-2pt}= \hspace{-2pt}\|\bfu(\bfq(k+1\hspace{-2pt}:\hspace{-2pt}m))\|_{\infty}$
        \State $\bfH_{k+1,k}(k\hspace{-2pt}+\hspace{-2pt}1,k)=\bfu(\bfq(i))$; $\bfq(k+1) \Leftrightarrow \bfq(i)$ $^{*}$
        \State  $\bfu_{k+1}=\bfu/\bfH_{k+1,k}(k\hspace{-2pt}+\hspace{-2pt}1,k)$;
        \Else
        \State break
    \EndIf   
    \State $\bfv = \bfA^T \bfu_{k+1}$
    \For{$j = 1,\ldots, k$}
        \State $\bfT_{k+1}(j,k+1) = \bfv(\bfg(j))$; $\bfv = \bfv - \bfT(j,k) \bfv_j$
    \EndFor
    \If{$k<n$ and $\bfv \neq \bf0$}
        \State Find $i \in \{k+1,\dots,n\}$ s.t. $|\bfv(\bfg(i))| = \|\bfv(\bfg(k+1:n))\|_{\infty}$
        \State $\bfT_{k+1}(k+1,k+1) = \bfv(\bfg(i))$; $\bfg(k+1) \Leftrightarrow \bfg(i)$ $^{*}$
        \State  $\bfv_{k+1} = \bfv/\bfT_{k+1}(k+1,k+1)$; $\bfz_{k+1}=\bfP_{k+1}\bfv_{k+1}$
    \Else
        \State break
    \EndIf 
\EndFor 
\end{algorithmic} 
\vspace{-3pt}
\begin{footnotesize}
* Here, $\bfa_i \Leftrightarrow \bfa_i$ denotes swapping the elements in positions $i$ and $j$ in the vector $\bfa$.
\end{footnotesize}
\end{algorithm}

\subsection{FCMRH and FLSLU}
The factorizations associated to the flexible Hessenberg method \eqref{eq:Hessenberg_factorization} and its generalized variant \eqref{eq:Gen_Hessenberg_factorization} can be written in a unified form by dropping the superscripts as
\begin{equation}\label{eq:both_factorization}
    \bfA \, \bfZ_{k} = \bfU_{k+1}  \, \bfH_{k+1,k},
\end{equation}
where, for the Hessenberg method, $\bfU_{k+1}=\bfV_{k+1}$.   

Assume that the iteration-dependent preconditioning $\bfP_k$ in Algorithm \ref{alg:Hessenberg} or \ref{alg:gen_hessenberg} is constructed at each iteration as a function of the solution at the previous iteration, as defined in \eqref{eq:MM}, i.e. $\bfW_k =\bfW(\bfL \bfx_{k-1})$, where the definition of $\bfW(\cdot)$ depends on the variational functional in \eqref{eq:var} that needs to be approximated (some examples are given in Section \ref{sec55}). Then, the factorization~\eqref{eq:both_factorization} can be used to find regularized  approximate solutions to the original least-squares problem~\eqref{eq_LS}. First, note that the columns of $\bfZ_{k}$, spanning the solution space, already incorporate prior information on the properties of the solution through the variable preconditioning. Therefore, assuming $\bfx_0=\bf0$ without loss of generality, one would like to solve 
\begin{equation}\label{eq:H_projection}
   \min_{\bfy\in\mathbb{R}^k} \| \bfA \bfZ_k\bfy -\bfb \|_2^2 = \min_{\bfy\in\mathbb{R}^k} \| \bfU_{k+1} (\bfH_{k+1,k}\bfy -\beta \bfe_1) \|_2^2.
\end{equation}
If $\bfU_{k+1}$ had orthonormal columns, the least-squares problem in \eqref{eq:H_projection} would be trivially reduced to a $k+1 \times k$ problem. However, since this is not the case, we consider quasi-minimal residual methods by only approximating this minimization with 
\begin{equation}\label{eq:H_projection_approx}
   \min_{\bfy\in\mathbb{R}^k} \| \bfU^{\dagger}_{k+1}(\bfA \bfZ_k\bfy -\bfb) \|_2^2 = \min_{\bfy\in\mathbb{R}^k} \| \bfH_{k+1,k}\bfy -\beta \bfe_1 \|_2^2,
\end{equation}
so, at each iteration, we solve a small system of dimension $k+1 \times k$. In particular, if the factorization in \eqref{eq:both_factorization} is coming from the flexible Hessenberg method in Algorithm
\ref{alg:Hessenberg}, we call this method flexible CMRH (FCMRH). If, instead, it is coming from the flexible generalized Hessenberg method in Algorithm
\ref{alg:gen_hessenberg}, we call this method flexible LSLU (FLSLU).

Note that, in the case where the subspace basis vectors are constructed using the non-flexible standard or generalized Hessenberg methods, the same approximation principle behind equation \eqref{eq:H_projection_approx} is at the core of the CMRH \cite{sadok1999new,sadok2012new} (or, for inverse problems \cite{brown2024hcmrh}), and LSLU methods \cite{brown2025hlslu}. See those references for more details on the name choices.

Moreover, one can also incorporate Tikhonov regularization in the projected problem \eqref{eq:H_projection_approx} to avoid semi-convergence, i.e., solve
\begin{equation}\label{eq:hybrid_H_projection_approx}
   \min_{\bfy\in\mathbb{R}^k} \| \bfH_{k+1,k}\bfy -\beta \bfe_1 \|_2^2+\lambda\|\bfy\|_2^2.
\end{equation}
In this case, we call the methods hybrid FCMRH (H-FCMRH), and hybrid FLSLU (H-FLSLU).

Note that, even if specific properties (such as sparsity) are already promoted through the choice of basis vectors, one could further aim to approximately solve the variational problem in \eqref{eq:functional} by minimizing an (approximated) projection of the reweighted minimization in \eqref{eq:MM} for $\bfx \in \mathcal{R}(\bfZ_k)$,
\begin{equation}\label{eq:IRW_H_projection_approx}
   \min_{\bfy\in\mathbb{R}^k} \| \bfH_{k+1,k}\bfy -\beta \bfe_1 \|_2^2+\lambda\|\bfU^{LU}_k\bfy\|_2^2, \quad \text{for} \quad \bfL^{LU}_k\bfU^{LU}_k = \bfW_k \bfL\,\bfZ_k,
\end{equation}
where $\bfU^{LU}_k$ is a lower triangular matrix coming from the LU decomposition of $\bfW_k \bfL\,\bfZ_k$.
In this case, we call the methods iteratively-reweighted FCMRH (IRW-FCMRH), and iteratively-reweighted FLSLU (IRW-FLSLU).

\subsection{Randomized FCMRH and FLSLU}
The residual approximation in \eqref{eq:H_projection_approx},  \eqref{eq:hybrid_H_projection_approx} or \eqref{eq:IRW_H_projection_approx}, can sometimes give too poor of an approximation of the solution. In that case, one can borrow tools from randomized numerical linear algebra to compute alternative cheap approximations of the functionals once the solution is restricted to live in a small subspace. 

In particular, we say that a matrix $\bfS \in \mathbb{R}^{s\times m}$ is a sketching matrix for a given tall and skinny matrix $\bfB$ if we assume that it fulfills the following subspace embedding property: for $\bfr \in \mathcal{R}(\bfB)$,
\[(1-\eps)\|\bfr\|_2\leq\|\bfS\bfr\|_2\leq (1+\eps)\|\bfr\|_2,\]
for a moderate distortion factor $0<\eps\leq1$. Given $s \ll m$, this is a linear dimensionality reduction technique. For more information, see ~\cite[\S~8.1]{Martinsson_Tropp_2020}. Note that, to keep the methods inner-product free, or at least communication-aware, many sketching techniques with theoretical guarantees cannot be used. However, one can use sub-sampling, maybe in combination with local leverage scores, see e.g. \cite{Kolda2022sampling}, for each of the (row) partitions of the matrix whose columns span $\mathcal{R}(\bfB)$. In this case, one would only require inner-products for vectors of reduced (sketched) dimension. It remains future work to study the theoretical properties of using this technique. Moreover, even if some inner products were used to form the sketched residual, this might still alleviate some of the problems associated to using inner-products in iterative solvers at low precision (such as early break up of the algorithm due to under or overflow, or information loss in the reorthogonalizaton process, see \cite{brown2024hcmrh}).

Using sketching in combination with least-squares problems has been widely used, and in particular for inverse problems in \cite{Sabate2025randCMRH,chung2025randomized,Sabate2025rand}. In this case, we propose  approximately solving \eqref{eq:H_projection} using a sketching matrix $\bfS_1\in\mathbb{R}^{s_1\times m}$, for $s_1=O(k_{\text{max}})$ such that we minimize
\begin{equation}\label{eq:H_sketched_projection}
   \min_{\bfy\in\mathbb{R}^k} \| \bfS_1 (\bfA \bfZ_k\bfy -\bfb)\|_2^2, 
\end{equation}
i.e.
\[
\bfy_k = ((\bfA \bfZ_k)^T{\bfS}_1^T \bfS_1\ (\bfA \bfZ_k))^{-1} (\bfA \bfZ_k)^T\bfS_1^T \bfS_1 \bfb.
\]
We call these methods sketched-and-solve flexible CMRH (S\&S-FCMRH) and LSLU (S\&S-FLSLU).

Moreover, we can also add hybrid regularization by solving
\begin{equation}\label{eq:H_sketched_projection_hybrid}
   \min_{\bfy\in\mathbb{R}^k} \| \bfS_1 (\bfA \bfZ_k\bfy -\bfb)\|_2^2+\lambda\|\bfy\|_2^2, 
\end{equation}
Note that this is a `project-then-regularize' method. Following the notation convention in \cite{Sabate2025rand}, we call these methods sketched-and-solve hybrid flexible CMRH (S\&S-H-FCMRH) and LSLU (S\&S-H-FLSLU) 

Last, we can also consider sketch projections of the variational problem in \eqref{eq:functional} for $\bfx \in \mathcal{R}(\bfZ_k)$: 
\begin{equation}\label{eq:H_sketched_projection_IRW}
   \min_{\bfy\in\mathbb{R}^k} \| \bfS_1 (\bfA \bfZ_k\bfy -\bfb)\|_2^2+\lambda\|\bfS_2\bfW_k \bfL \bfZ_k\bfy\|_2^2. 
\end{equation}
Here we use the same notation for $\bfS_2 \in\mathbb{R}^{s_2\times n_{L}}$ as we did in the previous method to ease the reading, but note that these might have different dimensions if $\bfL$ is not square. Consistently with \cite{Sabate2025rand}, we call the latter methods sketched-and-solve iteratively-reweighted flexible CMRH (S\&S-IRW-FCMRH) and LSLU (S\&S-IRW-FLSLU).

Note that the Arnoldi process (resp. GKB) or the Hessenberg method (resp. generalized Hessenberg), for the same matrix $\bfA$ and right hand side $\bfb$, construct different sets of basis vectors for the same Krylov subspaces. However, this is not true for the flexible case, since the variable preconditioners are applied to different vectors, leading to different subspaces.

As a last note, another interesting approach to use sketching in combination with iterative methods is to solve the original restricted problem \eqref{eq:H_projection}, possibly in combination with an explicit regularizer such as $\|\bfZ_k\bfy\|_2^2$ or $\|\bfW_k \bfL \bfZ_k\bfy\|_2^2$, with a nested iterative method. In this setting, since we are solving a sequence of tall and skinny problems, sketching can be used to compute an efficient but effective preconditioning, see e.g. \cite{Sabate2025rand}. In this work we do not explore this avenue, but we note that even if  inner products might be required in the inner solver, these would be for vectors of a much smaller dimension.

\section{Theoretical considerations}\label{sec4}
The following results are based on the study of the monotonicity of the minimization functional $f(\bfx)$ in \eqref{eq:functional} evaluated at the solutions produced by the proposed methods, and of the conditions that can guarantee such monotonicity. Here, we consider the regularization parameter to be fixed ahead of the iterations.

The idea of the proof always follows the same structure. First note that, at each iteration,  we consider a the functional \eqref{eq:functional} or a quadratic tangent majorant of it at the previous solution $\bfx_{k-1}$. Then, we perform a step of an (inexact) projection method on a subspace of increasing dimension, where we know that the sequence of solution subspaces is nested. Here, we mean inexact in the sense that both inner-product free and sketch-and-solve Krylov methods are quasi-minimal residual methods, so we consider the `inexactness' to be the distance between the minimized objective function and the true residual norm. The key idea to guarantee monotonicity of the original function~\eqref{eq:functional} at each iteration $k$ is to impose that the decrease in the inexact objective function is large enough compared to a given bound on the difference between the inexact and the exact objective functions, dictated by the source of inexactness.

\begin{lemma}\label{lemma1} Given two functions $h(\bfx)$ and $\hat h(\bfx)$ such that
\[
k_2 h(\bfx) \leq \hat h(\bfx) \leq k_1 h(\bfx), \quad k_1 > 0, \  k_2 > 0,
\]
and define $ \bfx_k=\argmin_{\bfx\in\mathcal{R}(\bfZ_k)} \bar h (\bfx)$. Then, for any $\bfx_{k-1}\in \mathcal{R}(\bfZ_{k-1}) \subset \mathcal{R}(\bfZ_k)$,
\[
\frac{\hat h(\bfx_{k-1})-\hat h(\bfx_{k})}{\hat h(\bfx_{k})} \geq\frac{k_1-k_2}{k_2} \quad  \Rightarrow \quad  h(\bfx_{k-1}) \geq h(\bfx_{k}) 
\]
\end{lemma}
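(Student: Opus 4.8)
The plan is to reduce the claim to a short algebraic manipulation of the two-sided bound $k_2 h(\bfx) \le \hat h(\bfx) \le k_1 h(\bfx)$, interpreting $\bar h$ in the definition of $\bfx_k$ as the inexact (minimized) objective $\hat h$. The crucial observation is that one must use \emph{opposite} sides of the sandwich at the two points. To bound the old true value from below I would invoke the upper bound $\hat h(\bfx_{k-1}) \le k_1 h(\bfx_{k-1})$, giving $h(\bfx_{k-1}) \ge \hat h(\bfx_{k-1})/k_1$; to bound the new true value from above I would invoke the lower bound $k_2 h(\bfx_k) \le \hat h(\bfx_k)$, giving $h(\bfx_k) \le \hat h(\bfx_k)/k_2$. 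Subtracting these yields
\[
h(\bfx_{k-1}) - h(\bfx_k) \ge \frac{\hat h(\bfx_{k-1})}{k_1} - \frac{\hat h(\bfx_k)}{k_2},
\]
so it suffices to prove that the right-hand side is nonnegative.

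Since $k_1, k_2 > 0$, nonnegativity of that right-hand side is, after clearing denominators, equivalent to $k_2\,\hat h(\bfx_{k-1}) \ge k_1\,\hat h(\bfx_k)$, i.e. $\hat h(\bfx_{k-1}) \ge (k_1/k_2)\,\hat h(\bfx_k)$. Subtracting $\hat h(\bfx_k)$ from both sides and dividing by $\hat h(\bfx_k)$ recovers exactly the hypothesis $\frac{\hat h(\bfx_{k-1}) - \hat h(\bfx_k)}{\hat h(\bfx_k)} \ge \frac{k_1 - k_2}{k_2}$. Thus the argument is a chain of equivalences run backwards from the assumed sufficient-decrease condition to the target inequality $h(\bfx_{k-1}) \ge h(\bfx_k)$, and the implication follows. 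It is worth remarking that this step does not actually use the minimizer characterization of $\bfx_k$; the minimizer property only guarantees that the left-hand side of the hypothesis is itself nonnegative (because $\bfx_{k-1}\in\mathcal{R}(\bfZ_{k-1})\subseteq\mathcal{R}(\bfZ_k)$), so that the condition is a genuine decrease requirement and not vacuous.

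I do not expect a genuine obstacle; the only points needing care are sign conventions and degeneracy. First, the sandwich forces $k_1 \ge k_2$ whenever $h$ is not identically zero, so the threshold $(k_1 - k_2)/k_2$ is nonnegative. Second, forming the relative decrease and dividing by $\hat h(\bfx_k)$ both presuppose $\hat h(\bfx_k) > 0$; I would dispatch the case $\hat h(\bfx_k) = 0$ separately, noting that $k_2 h(\bfx_k) \le \hat h(\bfx_k) = 0$ together with $h \ge 0$ forces $h(\bfx_k) = 0$, so the conclusion holds trivially. Because the functionals in the applications are squared residual and regularization norms, hence manifestly nonnegative, these conventions are immediate, and the entire content of the lemma lies in choosing the correct side of the bound at each of the two points.
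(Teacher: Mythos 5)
Your proof is correct and follows essentially the same route as the paper: both use the upper bound $\hat h(\bfx_{k-1}) \le k_1 h(\bfx_{k-1})$ at the old point and the lower bound $k_2 h(\bfx_k) \le \hat h(\bfx_k)$ at the new point, subtract, and rearrange the resulting nonnegativity condition into the assumed relative-decrease inequality. Your added remarks (that the minimizer property only ensures the hypothesis is non-vacuous, and the separate treatment of $\hat h(\bfx_k)=0$) are refinements of detail, not a different argument.
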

\begin{proof}
We want to find conditions for the  following inequality to hold
\begin{eqnarray}\label{eq_proof1}
    h(\bfx_{k-1}) - h(\bfx_{k}) &\geq& 
k_1^{-1} \hat h(\bfx_{k-1}) - k_2^{-1}  \, \hat h(\bfx_{k})  = \frac{k_2  \hat h(\bfx_{k-1})- k_1  \hat h(\bfx_{k}) }{k_1 k_2}\geq 0.
\end{eqnarray}
Since $k_1 k_2 \geq 0$, this is true if 
\begin{equation}\label{eq_proof2}
k_2  \hat h(\bfx_{k-1})- k_1  \hat h(\bfx_{k}) = k_2  (\Delta_h +\hat h(\bfx_{k}))- k_1  \hat h(\bfx_{k}) \geq 0
\end{equation}
where  $\Delta_h = \hat h(\bfx_{k-1})-\hat h(\bfx_{k})\geq0$ because
\begin{eqnarray*}
    \hat h(\bfx_{k}) = \min_{\bfx \in \mathcal{R}(\bfZ_{k})}\hat h(\bfx) \leq \hat h(\bfx_{k-1}) \quad \text{for} \quad \bfx_{k-1}\in\mathcal{R}(\bfZ_{k-1}) \subseteq \mathcal{R}(\bfZ_{k}). 
\end{eqnarray*}
Rearranging the terms in \eqref{eq_proof2}, we have that \eqref{eq_proof1} holds if:
\[\frac{\Delta_h}{\hat h(\bfx_{k})} \geq \frac{k_1-k_2}{k_2}.\]
\end{proof}

\begin{proposition}\label{prop1} Given a sequence of solutions $\{\bfx_k\}_{k=1,..	}$ obtained with FCMRH or FLSLU, then
\[\frac{\hat q_k(\bfx_{k-1})-\hat q_k(\bfx_{k})}{\hat q_k(\bfx_{k})} \geq \frac{\sigma_1^2(\bfU_{k+1}^{\dagger})}{\sigma_{k+1}^2(\bfU_{k+1}^{\dagger})}-1 = \kappa(\bfU_{k+1}^{\dagger})^2-1 \Rightarrow \|\bfA \bfx_{k-1} - \bfb\|_2^2 \geq \|\bfA \bfx_k - \bfb\|_2^2  \] 
where $\hat q_k(\bfx_{k-1})$ is the functional minimized at each iteration in the corresponding Krylov subspace, and where $\bfU_{k+1}$ is defined in  \eqref{eq:both_factorization}. Here, recall that $\sigma_i(\bfB)$ is the $i^{\text{th}}$ singular value of $\bfB$.
\end{proposition}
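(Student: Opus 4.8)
The plan is to invoke Lemma~\ref{lemma1} with the exact objective $h(\bfx)=\|\bfA\bfx-\bfb\|_2^2$ and the quasi-minimal functional $\hat h(\bfx)=\hat q_k(\bfx)$ minimized at iteration $k$, so that the entire task reduces to producing a two-sided bound $k_2\, h(\bfx)\leq \hat q_k(\bfx)\leq k_1\, h(\bfx)$ valid on the subspace $\mathcal{R}(\bfZ_k)$, and then identifying $k_1,k_2$ with the singular values of $\bfU_{k+1}$. Once those constants are pinned down, both the monotonicity implication and the threshold $(k_1-k_2)/k_2$ follow immediately from Lemma~\ref{lemma1}.

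First I would record that, taking $\bfx_0=\bfzero$, the first basis vector satisfies $\bfb=\beta\bfu_1=\beta\bfU_{k+1}\bfe_1$, while any $\bfx\in\mathcal{R}(\bfZ_k)$ can be written as $\bfx=\bfZ_k\bfy$. Combining this with the unified factorization~\eqref{eq:both_factorization} gives
\[
\bfA\bfx-\bfb=\bfU_{k+1}\bfH_{k+1,k}\bfy-\beta\bfU_{k+1}\bfe_1=\bfU_{k+1}\bfw, \qquad \bfw:=\bfH_{k+1,k}\bfy-\beta\bfe_1,
\]
so that the whole residual lives in $\mathcal{R}(\bfU_{k+1})$. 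Assuming $\bfU_{k+1}$ has full column rank (so that $\bfU^{\dagger}_{k+1}\bfU_{k+1}=\bfI$), the quasi-minimal functional reduces to $\hat q_k(\bfx)=\|\bfU^{\dagger}_{k+1}(\bfA\bfx-\bfb)\|_2^2=\|\bfw\|_2^2$, consistently with~\eqref{eq:H_projection_approx}, whereas the exact residual is $h(\bfx)=\|\bfU_{k+1}\bfw\|_2^2$.

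The core estimate is then the singular-value sandwich $\sigma_{k+1}(\bfU_{k+1})^2\|\bfw\|_2^2\leq\|\bfU_{k+1}\bfw\|_2^2\leq\sigma_1(\bfU_{k+1})^2\|\bfw\|_2^2$, which rearranges to
\[
\frac{1}{\sigma_1(\bfU_{k+1})^2}\, h(\bfx)\leq \hat q_k(\bfx)\leq \frac{1}{\sigma_{k+1}(\bfU_{k+1})^2}\, h(\bfx),
\]
identifying $k_2=\sigma_1(\bfU_{k+1})^{-2}$ and $k_1=\sigma_{k+1}(\bfU_{k+1})^{-2}$. Since the nonzero singular values of $\bfU^{\dagger}_{k+1}$ are the reciprocals of those of $\bfU_{k+1}$, namely $\sigma_1(\bfU^{\dagger}_{k+1})=\sigma_{k+1}(\bfU_{k+1})^{-1}$ and $\sigma_{k+1}(\bfU^{\dagger}_{k+1})=\sigma_1(\bfU_{k+1})^{-1}$, I can rewrite $k_1=\sigma_1(\bfU^{\dagger}_{k+1})^2$ and $k_2=\sigma_{k+1}(\bfU^{\dagger}_{k+1})^2$, whence $(k_1-k_2)/k_2=\kappa(\bfU^{\dagger}_{k+1})^2-1$. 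Substituting these into Lemma~\ref{lemma1} then yields exactly the stated implication.

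The main obstacle I anticipate is justifying the full-column-rank hypothesis on $\bfU_{k+1}$, which is precisely what makes $\bfU^{\dagger}_{k+1}\bfU_{k+1}=\bfI$ and the reciprocal singular-value identity hold exactly; this corresponds to the non-breakdown regime of the processes (the nonzero-subdiagonal branch in Algorithms~\ref{alg:Hessenberg} and~\ref{alg:gen_hessenberg}), so it is cleanest to carry it as a running assumption. A secondary point to check is that the identification $\hat q_k(\bfx)=\|\bfU^{\dagger}_{k+1}(\bfA\bfx-\bfb)\|_2^2$ holds uniformly for both FCMRH and FLSLU through the single factorization~\eqref{eq:both_factorization}, so that no case split between the two methods is needed.
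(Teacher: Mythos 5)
Your proof is correct and follows essentially the same route as the paper's: sandwich the quasi-minimal functional $\hat q_k$ between constant multiples of the true residual norm and invoke Lemma~\ref{lemma1} with $h(\bfx)=\|\bfA\bfx-\bfb\|_2^2$, $\hat h(\bfx)=\hat q_k(\bfx)$, yielding the threshold $\kappa(\bfU_{k+1}^{\dagger})^2-1$. In fact you are somewhat more careful than the paper, which states the two-sided bound directly in terms of the singular values of $\bfU_{k+1}^{\dagger}$ without noting that the lower bound requires the residual to lie in $\mathcal{R}(\bfU_{k+1})$ (equivalently, your explicit reduction $\bfA\bfx-\bfb=\bfU_{k+1}\bfw$) and without flagging the non-breakdown, full-column-rank assumption that you correctly carry as a hypothesis.
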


\begin{proof} At each iteration of FCMRH or FLSLU, the following functional in  \eqref{eq:H_projection_approx} is minimized
\begin{equation*}
   \hat q_k(\bfx) = \| \bfU_{k+1}^{\dagger} (\bfA \bfx - \bfb) \|_2^2,
\end{equation*}
where $\bfU_{k+1}$ is defined in  \eqref{eq:both_factorization}. Then, we have that:
\begin{eqnarray*}
\underbrace{\sigma_{k+1}^2(\bfU_{k+1}^{\dagger})}_{k_2 \geq 0 } \,  \underbrace{\| \bfA \bfx - \bfb \|_2^2}_{f(\bfx)} \leq   \hat q_k(\bfx) \leq  \underbrace{\sigma_1^2(\bfU_{k+1}^{\dagger})}_{k_1 \geq 0 } \, \underbrace{ \| \bfA \bfx - \bfb \|_2^2}_{f(\bfx)}.
\end{eqnarray*}
Using Lemma \ref{lemma1} with $h(\bfx)=\|\bfA\bfx-\bfb\|_2^2$ and $\hat h(\bfx) = \hat q_k(\bfx)$, the following are sufficient conditions for the solution at iteration $k$ to be a descent step in the residual norm:
\[\frac{\hat q_k(\bfx_{k-1})-\hat q_k(\bfx_{k})}{\hat q_k(\bfx_{k})} \geq \frac{k_1-k_2}{k_2}=\kappa(\bfU_{k+1}^{\dagger})^2-1.\]
\end{proof}

\begin{corollary} Given a sequence of solutions $\{\bfx_k\}_{k=1,..	}$ obtained with S\&S-FCMRH or S\&S-FLSLU, then,
\[
\frac{\hat q_k(\bfx_{k-1})-\hat q_k(\bfx_{k})}{\hat q_k(\bfx_{k})} \geq \kappa(\bfS_1)^2-1 \Rightarrow \|\bfA \bfx_{k-1} - \bfb\|_2^2 \geq \|\bfA \bfx_k - \bfb\|_2^2 
\]
where $\hat q_k(\bfx_{k-1})$ is the (sketched) functional \eqref{eq:H_sketched_projection}, minimized at each iteration in the corresponding Krylov subspace. Moreover, since at iteration $k$, $\bfx_{k} \in \mathcal{R}(\bfZ_k)$ and $\bfx_{k-1} \in \mathcal{R}(\bfZ_{k-1})\subset \mathcal{R}(\bfZ_k)$, a tighter bound can be similarly derived by considering the singular values of $\bfS_1$ restricted to the range of $[\bfA \bfZ_k,\bfb]$. 
\end{corollary}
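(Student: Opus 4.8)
The plan is to transcribe the proof of Proposition~\ref{prop1} almost verbatim, swapping the quasi-minimal residual functional for the sketched one. Here the functional minimized at each iteration is the sketched residual $\hat q_k(\bfx) = \|\bfS_1(\bfA\bfx-\bfb)\|_2^2$ from \eqref{eq:H_sketched_projection}, and by construction $\bfx_k=\argmin_{\bfx\in\mathcal{R}(\bfZ_k)}\hat q_k(\bfx)$, so the hypothesis of Lemma~\ref{lemma1} is met. The exact objective is again $h(\bfx)=\|\bfA\bfx-\bfb\|_2^2$, and the subspaces are nested, $\mathcal{R}(\bfZ_{k-1})\subseteq\mathcal{R}(\bfZ_k)$, since columns are merely appended. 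It therefore remains to produce a two-sided sandwich $k_2\, h(\bfx)\leq \hat q_k(\bfx)\leq k_1\, h(\bfx)$ and feed it into Lemma~\ref{lemma1}, exactly as in Proposition~\ref{prop1}.

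The delicate point—and the reason the statement speaks of singular values restricted to $\mathcal{R}([\bfA\bfZ_k,\bfb])$—is that the sandwich cannot use the global extreme singular values of $\bfS_1$. Since $\bfS_1\in\mathbb{R}^{s_1\times m}$ with $s_1\ll m$ is a fat sketching matrix, it has a nontrivial null space, so its smallest singular value over all of $\mathbb{R}^m$ is zero and no positive lower constant $k_2$ exists globally. The fix is to restrict to the subspace in which every residual lives: for $\bfx\in\mathcal{R}(\bfZ_k)$ we have $\bfx=\bfZ_k\bfy$, hence $\bfA\bfx-\bfb=\bfA\bfZ_k\bfy-\bfb\in\mathcal{R}([\bfA\bfZ_k,\bfb])$, a subspace of dimension at most $k+1$. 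Letting $\sigma_{\max},\sigma_{\min}$ denote the largest and smallest singular values of $\bfS_1$ restricted to $\mathcal{R}([\bfA\bfZ_k,\bfb])$, the subspace embedding property forces these into $[1-\eps,1+\eps]$, so in particular $\sigma_{\min}>0$, and for $\bfr=\bfA\bfx-\bfb$ one obtains
\[
\sigma_{\min}^2\,\|\bfr\|_2^2 \;\leq\; \|\bfS_1\bfr\|_2^2 \;\leq\; \sigma_{\max}^2\,\|\bfr\|_2^2,
\]
giving $k_1=\sigma_{\max}^2$ and $k_2=\sigma_{\min}^2$.

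Lemma~\ref{lemma1} then applies with $h(\bfx)=\|\bfA\bfx-\bfb\|_2^2$ and $\hat h(\bfx)=\hat q_k(\bfx)$, yielding the sufficient condition with constant $\tfrac{k_1-k_2}{k_2}=\tfrac{\sigma_{\max}^2}{\sigma_{\min}^2}-1=\kappa^2-1$, where $\kappa=\sigma_{\max}/\sigma_{\min}$; this is precisely the tighter, subspace-restricted bound quoted at the end of the statement. The looser form written with $\kappa(\bfS_1)$ is recovered by replacing these restricted singular values with extreme singular values of $\bfS_1$ over the fixed range for which the embedding was designed, so that a single uniform condition number serves every iterate. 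I expect the only genuine obstacle to be this interpretation of $\kappa(\bfS_1)$: one must argue that the sketch acts injectively, with strictly positive smallest singular value, on the span of the residuals—exactly what the subspace embedding guarantees—after which everything reduces mechanically to Proposition~\ref{prop1}.
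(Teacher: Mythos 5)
Your proof is correct and follows essentially the same route as the paper: the paper's own proof is just a one-line remark that the argument of Proposition~\ref{prop1} adapts directly, which is precisely what you carry out via Lemma~\ref{lemma1} with the sketched functional $\hat q_k(\bfx)=\|\bfS_1(\bfA\bfx-\bfb)\|_2^2$. Your additional observation---that the positive lower constant $k_2$ cannot come from the global smallest singular value of a fat sketch (whose null space is nontrivial) but must come from the singular values of $\bfS_1$ restricted to $\mathcal{R}([\bfA\bfZ_k,\bfb])$, where the embedding property guarantees $\sigma_{\min}>0$---is a genuine clarification of a point the paper leaves implicit in its closing remark about the tighter bound.
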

\begin{proof}
The proof can be readily adapted from the proof of Preposition \ref{prop1}. Similar bounds can also be found in \cite{Sabate2025rand}.
\end{proof}

\begin{proposition}\label{prop2} Given a sequence of solutions $\{\bfx_k\}_{k=1,..	}$ obtained with H-FCMRH or H-FLSLU, and a fixed regularization parameter $\lambda$, we define
\[
t(\bfx)=\|\bfA\bfx-\bfb\|_2^2+\lambda\|\bfx\|_2^2.
\]
Then
\[\frac{\hat q_k(\bfx_{k-1})-\hat q_k(\bfx_{k})}{\hat q_k(\bfx_{k})} \geq \frac{\max(\sigma_1^2(\bfU_{k+1}^{\dagger}),1)}{\min(\sigma_{k+1}^2(\bfU_{k+1}^{\dagger}),1)}-1 \Rightarrow t(\bfx_{k-1}) \geq t(\bfx_k) \]
where $\hat q_k(\bfx_{k-1})$ is the functional minimized at each iteration in the corresponding Krylov subspace, and where $\bfU_{k+1}$ is defined in  \eqref{eq:both_factorization}.
\end{proposition}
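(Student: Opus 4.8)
The plan is to follow the proof of Proposition~\ref{prop1} on the data-fit term while carrying the Tikhonov penalty along as a second summand, and then to invoke Lemma~\ref{lemma1}. First I would pin down the inexact functional. At iteration $k$ the hybrid methods minimize \eqref{eq:hybrid_H_projection_approx}; writing $\bfx=\bfZ_k\bfy$ and using \eqref{eq:both_factorization} together with $\bfb=\beta\bfU_{k+1}\bfe_1$ gives $\bfA\bfx-\bfb=\bfU_{k+1}(\bfH_{k+1,k}\bfy-\beta\bfe_1)$, and since $\bfU_{k+1}$ has full column rank $\bfU_{k+1}^{\dagger}\bfU_{k+1}=\bfI$. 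Exactly as in \eqref{eq:H_projection_approx}, the minimized functional is therefore
\[
\hat q_k(\bfx)=\|\bfU_{k+1}^{\dagger}(\bfA\bfx-\bfb)\|_2^2+\lambda\|\bfy\|_2^2 ,
\]
i.e. the quasi-residual plus the projected penalty. The goal is a two-sided bound $k_2\,t(\bfx)\le\hat q_k(\bfx)\le k_1\,t(\bfx)$ of the form required by Lemma~\ref{lemma1}.

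I would obtain this bound one summand at a time. For the data-fit term, since $\bfA\bfx-\bfb\in\mathcal{R}(\bfU_{k+1})$, the SVD estimate already used in Proposition~\ref{prop1} gives $\sigma_{k+1}^2(\bfU_{k+1}^{\dagger})\,\|\bfA\bfx-\bfb\|_2^2\le\|\bfU_{k+1}^{\dagger}(\bfA\bfx-\bfb)\|_2^2\le\sigma_1^2(\bfU_{k+1}^{\dagger})\,\|\bfA\bfx-\bfb\|_2^2$. The penalty $\lambda\|\bfy\|_2^2$ is common to $\hat q_k$ and to $t$ (reading the penalty of $t$ in the same coordinates), so it contributes the trivial ratio $1$. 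Combining a nonnegative term sandwiched with factors in $[\sigma_{k+1}^2(\bfU_{k+1}^{\dagger}),\sigma_1^2(\bfU_{k+1}^{\dagger})]$ and a nonnegative term sandwiched with factors in $[1,1]$, via the elementary facts $c_2 A+d_2 B\ge\min(c_2,d_2)(A+B)$ and $c_1 A+d_1 B\le\max(c_1,d_1)(A+B)$ for nonnegative quantities, yields the bound with $k_1=\max(\sigma_1^2(\bfU_{k+1}^{\dagger}),1)$ and $k_2=\min(\sigma_{k+1}^2(\bfU_{k+1}^{\dagger}),1)$.

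Finally I would apply Lemma~\ref{lemma1} with $h(\bfx)=t(\bfx)$ and $\hat h(\bfx)=\hat q_k(\bfx)$. Its hypotheses hold because the Hessenberg bases are nested, $\bfZ_k=[\bfZ_{k-1},\bfz_k]$, so $\mathcal{R}(\bfZ_{k-1})\subset\mathcal{R}(\bfZ_k)$ and $\bfx_k=\argmin_{\bfx\in\mathcal{R}(\bfZ_k)}\hat q_k(\bfx)$ by construction. The threshold delivered by the lemma is $\tfrac{k_1-k_2}{k_2}=\tfrac{\max(\sigma_1^2(\bfU_{k+1}^{\dagger}),1)}{\min(\sigma_{k+1}^2(\bfU_{k+1}^{\dagger}),1)}-1$, which is exactly the claimed condition, and the conclusion is $t(\bfx_{k-1})\ge t(\bfx_k)$.

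The step that needs real care — and the main obstacle — is the regularization term. The projected problem penalizes the coordinate vector $\|\bfy\|_2^2$, not $\|\bfx\|_2^2=\|\bfZ_k\bfy\|_2^2$; if one insisted on $\|\bfx\|_2^2$ the conditioning of the (non-orthonormal) basis $\bfZ_k$ would leak into the constants and the clean $\bfU_{k+1}^{\dagger}$-only threshold would be lost. The bound therefore hinges on reading the penalty of $t$ in the same coordinates, which is legitimate precisely because nestedness makes the coordinate representation consistent: $\bfx_{k-1}$ sits in $\mathcal{R}(\bfZ_k)$ with padded coordinates $[\bfy_{k-1};\bfzero]$, so $\|\bfy\|_2^2$ agrees at iterations $k-1$ and $k$. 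I would also note, as for the sketched corollary, that the bound is pessimistic: the data-fit factor only needs to control $\hat q_k$ on $\mathcal{R}(\bfU_{k+1})$ (equivalently on $\mathcal{R}([\bfA\bfZ_k,\bfb])$), so restricting the singular values to that subspace sharpens the threshold.
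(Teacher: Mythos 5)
Your proposal follows the paper's own proof almost step for step: identify the minimized functional $\hat q_k$, sandwich it between multiples of $t$ with $k_1=\max(\sigma_1^2(\bfU_{k+1}^{\dagger}),1)$ and $k_2=\min(\sigma_{k+1}^2(\bfU_{k+1}^{\dagger}),1)$, and invoke Lemma~\ref{lemma1} exactly as in Proposition~\ref{prop1}. Your additional checks --- that $\bfA\bfx-\bfb\in\mathcal{R}(\bfU_{k+1})$ so the lower singular-value bound is legitimate, that $\bfU_{k+1}^{\dagger}\bfU_{k+1}=\bfI$ because the (pivoted) basis is unit lower triangular and hence full rank, and that the spaces $\mathcal{R}(\bfZ_k)$ are nested --- are all correct, and are in fact left implicit in the paper.

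The one point of divergence is the bookkeeping of the penalty term, and it changes what is actually proved. The paper's proof simply declares the minimized functional to be $\hat q_k(\bfx)=\|\bfU_{k+1}^{\dagger}(\bfA\bfx-\bfb)\|_2^2+\lambda\|\bfx\|_2^2$, i.e., it identifies the projected penalty $\|\bfy\|_2^2$ of \eqref{eq:hybrid_H_projection_approx} with $\|\bfx\|_2^2=\|\bfZ_k\bfy\|_2^2$; under that identification the penalty cancels exactly against the penalty in $t$, and the conclusion is literally $t(\bfx_{k-1})\geq t(\bfx_k)$. You decline this identification (correctly observing that otherwise the conditioning of $\bfZ_k$ enters the constants) and instead read the penalty of $t$ in $\bfZ_k$-coordinates. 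But then the quantity whose monotonicity you establish is $\|\bfA\bfx-\bfb\|_2^2+\lambda\|\bfy\|_2^2$, which is \emph{not} the $t(\bfx)=\|\bfA\bfx-\bfb\|_2^2+\lambda\|\bfx\|_2^2$ of the statement unless $\bfZ_k$ has orthonormal columns --- which it does not. So, strictly speaking, your argument proves a coordinate version of the proposition, while the paper's argument proves the stated version at the price of the unacknowledged assumption $\|\bfy\|_2=\|\bfZ_k\bfy\|_2$: the two proofs carry the same gap, yours surfaced and the paper's hidden. If one insists on the statement exactly as written without that identification, the fix is to let the conditioning of $\bfZ_k$ leak in after all: for $\bfx=\bfZ_k\bfy$ one has $\sigma_k^2(\bfZ_k^{\dagger})\|\bfx\|_2^2\leq\|\bfy\|_2^2\leq\sigma_1^2(\bfZ_k^{\dagger})\|\bfx\|_2^2$, so the threshold becomes $\max(\sigma_1^2(\bfU_{k+1}^{\dagger}),\sigma_1^2(\bfZ_k^{\dagger}))/\min(\sigma_{k+1}^2(\bfU_{k+1}^{\dagger}),\sigma_k^2(\bfZ_k^{\dagger}))-1$, at the cost of losing the clean $\bfU_{k+1}$-only bound. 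Your closing remark on sharpening the data-fit factor by restricting to $\mathcal{R}([\bfA\bfZ_k,\bfb])$ mirrors what the paper's corollaries do for the sketched variants.
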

\begin{proof} At each iteration of H-FCMRH or H-FLSLU, the following functional in  \eqref{eq:hybrid_H_projection_approx} is minimized
\begin{equation*}
   \hat q_k(\bfx) = \| \bfU_{k+1}^{\dagger} (\bfA \bfx - \bfb) \|_2^2 +\lambda\|\bfx\|_2^2,
\end{equation*}
where $\bfU_{k+1}$ is defined in  \eqref{eq:both_factorization}. Then, we have that:
\begin{eqnarray*}
\underbrace{\min(\sigma_{k+1}^2(\bfU_{k+1}^{\dagger}),1)}_{k_2 \geq 0 } \,t(\bfx) \leq   \hat q_k(\bfx) \leq  \underbrace{\max(\sigma_1^2(\bfU_{k+1}^{\dagger}),1)}_{k_1 \geq 0 } \, t(\bfx).
\end{eqnarray*}
Using Lemma \ref{lemma1} with $h(\bfx)=t(\bfx)$ and $\hat h(\bfx) = \hat q_k(\bfx)$, the following are sufficient conditions for the solution at iteration $k$ to be a descent step in the Tikhonov-regularized function :
\[\frac{\hat q_k(\bfx_{k-1})-\hat q_k(\bfx_{k})}{\hat q_k(\bfx_{k})} \geq \frac{k_1-k_2}{k_2}=\frac{\max(\sigma_1^2(\bfU_{k+1}^{\dagger}),1)}{\min(\sigma_{k+1}^2(\bfU_{k+1}^{\dagger}),1)}-1.\]
\end{proof}
\begin{corollary} Given a sequence of solutions $\{\bfx_k\}_{k=1,..	}$ obtained with S\&S-H-FCMRH or S\&S-H-FLSLU, then,
\[
\frac{\hat q_k(\bfx_{k-1})-\hat q_k(\bfx_{k})}{\hat q_k(\bfx_{k})} \geq \kappa(\bfS_1)^2-1 \Rightarrow t(\bfx_{k-1})\geq  t(\bfx_k),
\]
where $\hat q_k(\bfx_{k-1})$ is the (sketched) functional \eqref{eq:H_sketched_projection_hybrid}, minimized at each iteration in the corresponding Krylov subspace. Moreover, since at iteration $k$, $\bfx_{k} \in \mathcal{R}(\bfZ_k)$ and $\bfx_{k-1} \in \mathcal{R}(\bfZ_{k-1})\subset \mathcal{R}(\bfZ_k)$, a tighter bound can be similarly derived by considering the singular values of $\bfS_1$ restricted to the range of $[\bfA \bfZ_k,\bfb]$. 
\end{corollary}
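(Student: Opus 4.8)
The plan is to mirror the proof of Proposition~\ref{prop2}, replacing the quasi-minimal residual operator $\bfU_{k+1}^{\dagger}$ by the sketching matrix $\bfS_1$ and invoking the subspace embedding property in place of the singular-value bounds on $\bfU_{k+1}^{\dagger}$. First I would identify the functional that is actually minimized at iteration $k$, namely the sketched hybrid objective from \eqref{eq:H_sketched_projection_hybrid},
\[
\hat q_k(\bfx) = \|\bfS_1(\bfA\bfx-\bfb)\|_2^2 + \lambda\|\bfx\|_2^2, \qquad \bfx_k = \argmin_{\bfx\in\mathcal{R}(\bfZ_k)}\hat q_k(\bfx),
\]
and observe that for every $\bfx\in\mathcal{R}(\bfZ_k)$ the residual $\bfA\bfx-\bfb$ lies in $\mathcal{R}([\bfA\bfZ_k,\bfb])$, which is exactly the subspace on which $\bfS_1$ is assumed to act as an embedding.

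The next step is to produce the two-sided sandwich required by Lemma~\ref{lemma1}. On the relevant subspace the action of $\bfS_1$ is controlled by its extreme singular values, so $\sigma_{\min}^2(\bfS_1)\,\|\bfA\bfx-\bfb\|_2^2 \leq \|\bfS_1(\bfA\bfx-\bfb)\|_2^2 \leq \sigma_1^2(\bfS_1)\,\|\bfA\bfx-\bfb\|_2^2$. Adding the regularization term $\lambda\|\bfx\|_2^2$, which $\hat q_k$ and $t$ share verbatim, and pulling the multiplicative factors across the two summands gives
\[
\min(\sigma_{\min}^2(\bfS_1),1)\, t(\bfx) \leq \hat q_k(\bfx) \leq \max(\sigma_1^2(\bfS_1),1)\, t(\bfx).
\]
Applying Lemma~\ref{lemma1} with $h=t$, $\hat h=\hat q_k$, $k_1=\max(\sigma_1^2(\bfS_1),1)$ and $k_2=\min(\sigma_{\min}^2(\bfS_1),1)$ then delivers the sufficient condition $\frac{\hat q_k(\bfx_{k-1})-\hat q_k(\bfx_k)}{\hat q_k(\bfx_k)}\geq\frac{k_1-k_2}{k_2}$.

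To recover the stated threshold I would simplify $\frac{k_1-k_2}{k_2}$: because $\bfS_1$ is a subspace embedding with distortion $0<\eps\leq1$, its relevant singular values straddle unity, $\sigma_{\min}(\bfS_1)\leq 1\leq\sigma_1(\bfS_1)$, so that $\max(\sigma_1^2(\bfS_1),1)=\sigma_1^2(\bfS_1)$ and $\min(\sigma_{\min}^2(\bfS_1),1)=\sigma_{\min}^2(\bfS_1)$, collapsing the bound to $\kappa(\bfS_1)^2-1$ and yielding $t(\bfx_{k-1})\geq t(\bfx_k)$. The tighter-bound remark then follows by reading $\sigma_1,\sigma_{\min}$ (hence $\kappa$) as the singular values of $\bfS_1$ restricted to $\mathcal{R}([\bfA\bfZ_k,\bfb])$ rather than of the full operator.

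The main obstacle I anticipate is not the algebra, which is essentially identical to that of Proposition~\ref{prop2}, but the care needed in the embedding step. Unlike $\bfU_{k+1}^{\dagger}$, whose singular-value bounds hold on all of its domain, $\bfS_1$ is only controlled on $\mathcal{R}([\bfA\bfZ_k,\bfb])$, so I must verify that every residual entering $\hat q_k$ stays in that subspace and interpret $\sigma_1(\bfS_1)$, $\sigma_{\min}(\bfS_1)$ accordingly. The clean collapse to $\kappa(\bfS_1)^2-1$ additionally relies on the embedding being centered about an isometry; without the straddling property one only obtains the weaker threshold $\frac{\max(\sigma_1^2(\bfS_1),1)}{\min(\sigma_{\min}^2(\bfS_1),1)}-1$.
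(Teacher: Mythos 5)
Your proof is correct and is exactly the adaptation of Proposition~\ref{prop2} that the paper's own one-line proof invokes: sandwich $t(\bfx)$ using the extreme singular values of $\bfS_1$ acting on $\mathcal{R}([\bfA\bfZ_k,\bfb])$, apply Lemma~\ref{lemma1} with $h=t$ and $\hat h=\hat q_k$, and identify the resulting threshold with $\kappa(\bfS_1)^2-1$. Your two caveats are well taken and in fact sharper than the paper's treatment: the lower sandwich bound for the wide matrix $\bfS_1$ only holds on the embedded subspace (so the singular values must be read as those of the restriction), and the clean collapse to $\kappa(\bfS_1)^2-1$ silently assumes the straddling property $\sigma_{\min}(\bfS_1)\leq 1\leq\sigma_1(\bfS_1)$, without which Lemma~\ref{lemma1} only delivers the more demanding threshold $\max(\sigma_1^2(\bfS_1),1)/\min(\sigma_{\min}^2(\bfS_1),1)-1$ --- a looseness in the paper's statement, not a gap in your argument.
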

\begin{proof}
The proof can be readily adapted from the proof of Preposition \ref{prop2}. Similar bounds can also be found in \cite{Sabate2025rand}.
\end{proof}

\begin{proposition}\label{prop3} Given a sequence of solutions $\{\bfx_k\}_{k=1,..	}$ obtained with IRW-FCMRH or IRW-FLSLU, then,
for $f(\bfx)$ in \eqref{eq:functional} with a fixed regularization parameter $\lambda$ 
\[\frac{\hat q_k(\bfx_{k-1})-\hat q_k(\bfx_{k})}{\hat q_k(\bfx_{k})} \geq \frac{\max(\sigma_1^2(\bfU_{k+1}^{\dagger}),\sigma_1^2((\bfU_{k}^{LU})^{\dagger}))}{\min(\sigma_{k+1}^2(\bfU_{k+1}^{\dagger}),\sigma_k^2((\bfU_{k}^{LU})^{\dagger}))}-1 = \kappa(\bfC)^2-1 \Rightarrow f(\bfx_{k-1}) \geq f(\bfx_k) \]
where $\hat q_k(\bfx_{k-1})$ is the functional minimized at each iteration in the corresponding Krylov subspace, where 
$\bfU_{k+1}$ and $\bfU^{LU}_{k}$ are defined in  \eqref{eq:both_factorization} and \eqref{eq:IRW_H_projection_approx}, respectively, and $\bfC$ is a block diagonal matrix with $\bfU_{k+1}^{\dagger}$ and $(\bfU^{LU}_{k})^{\dagger}$ in its diagonal. 
\end{proposition}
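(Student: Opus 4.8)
The plan is to reuse the two-stage template of Propositions \ref{prop1} and \ref{prop2}, but to insert a majorization step at the end, which is what the non-quadratic nature of $f$ forces. At iteration $k$ the IRW methods minimize over $\bfx=\bfZ_k\bfy\in\mathcal{R}(\bfZ_k)$ the functional
\[
\hat q_k(\bfx)=\|\bfU_{k+1}^{\dagger}(\bfA\bfx-\bfb)\|_2^2+\lambda\|\bfU^{LU}_k\bfy\|_2^2,
\]
which is \eqref{eq:IRW_H_projection_approx} rewritten through $\bfH_{k+1,k}\bfy-\beta\bfe_1=\bfU_{k+1}^{\dagger}(\bfA\bfx-\bfb)$. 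The fixed object to feed into Lemma \ref{lemma1} is not $f$ itself but the quadratic tangent majorant of \eqref{eq:functional} at the previous iterate,
\[
q_k(\bfx)=\|\bfA\bfx-\bfb\|_2^2+\lambda\|\bfW_k\bfL\bfx\|_2^2,\qquad \bfW_k=\bfW(\bfL\bfx_{k-1}),
\]
which by construction of the weights satisfies $q_k(\bfx)\geq f(\bfx)$ for all $\bfx$ and $q_k(\bfx_{k-1})=f(\bfx_{k-1})$.

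First I would establish a two-sided bound $k_2\,q_k(\bfx)\le \hat q_k(\bfx)\le k_1\,q_k(\bfx)$ on $\mathcal{R}(\bfZ_k)$, term by term, by the same mechanism as in Proposition \ref{prop1}. For the data-fit term, since $\bfx_0=\bfzero$ gives $\bfb=\beta\bfu_1$ and $\bfA\bfZ_k=\bfU_{k+1}\bfH_{k+1,k}$, setting $\bfr=\bfH_{k+1,k}\bfy-\beta\bfe_1$ yields $\bfA\bfx-\bfb=\bfU_{k+1}\bfr$ while $\bfU_{k+1}^{\dagger}(\bfA\bfx-\bfb)=\bfr$, so the singular values of $\bfU_{k+1}$ sandwich the approximation between $\sigma_{k+1}^2(\bfU_{k+1}^{\dagger})$ and $\sigma_1^2(\bfU_{k+1}^{\dagger})$ times $\|\bfA\bfx-\bfb\|_2^2$. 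For the regularization term, the factorization $\bfW_k\bfL\bfZ_k=\bfL^{LU}_k\bfU^{LU}_k$ gives $\bfW_k\bfL\bfx=\bfL^{LU}_k\bfp$ with $\bfp=\bfU^{LU}_k\bfy$, so the identical argument with $\bfL^{LU}_k$ in place of $\bfU_{k+1}$ sandwiches $\|\bfU^{LU}_k\bfy\|_2^2$ between $\sigma_k^2((\bfL^{LU}_k)^{\dagger})$ and $\sigma_1^2((\bfL^{LU}_k)^{\dagger})$ times $\|\bfW_k\bfL\bfx\|_2^2$. Collecting the worst constants yields $k_1=\max(\sigma_1^2(\bfU_{k+1}^{\dagger}),\sigma_1^2((\bfL^{LU}_k)^{\dagger}))$ and $k_2=\min(\sigma_{k+1}^2(\bfU_{k+1}^{\dagger}),\sigma_k^2((\bfL^{LU}_k)^{\dagger}))$, and writing $\bfC$ for the block-diagonal matrix carrying $\bfU_{k+1}^{\dagger}$ and $(\bfL^{LU}_k)^{\dagger}$ on its diagonal, these are exactly $\sigma_{\max}^2(\bfC)$ and $\sigma_{\min}^2(\bfC)$, so $k_1/k_2=\kappa(\bfC)^2$ and $(k_1-k_2)/k_2=\kappa(\bfC)^2-1$.

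With the bound in hand I would apply Lemma \ref{lemma1} with $h=q_k$ and $\hat h=\hat q_k$. Since $\bfx_k=\argmin_{\bfx\in\mathcal{R}(\bfZ_k)}\hat q_k(\bfx)$ and $\bfx_{k-1}\in\mathcal{R}(\bfZ_{k-1})\subseteq\mathcal{R}(\bfZ_k)$, the hypothesis $(\hat q_k(\bfx_{k-1})-\hat q_k(\bfx_k))/\hat q_k(\bfx_k)\ge\kappa(\bfC)^2-1$ delivers $q_k(\bfx_{k-1})\ge q_k(\bfx_k)$. The conclusion then follows from the majorization bridge
\[
f(\bfx_k)\le q_k(\bfx_k)\le q_k(\bfx_{k-1})=f(\bfx_{k-1}).
\]
Any additive or multiplicative constants dropped in passing from \eqref{eq:functional} to \eqref{eq:MM} are immaterial here: multiplicative factors are absorbed into $\lambda$ and leave the minimizer unchanged, while an additive constant cancels in the difference $q_k(\bfx_{k-1})-q_k(\bfx_k)$ and hence does not affect the sign delivered by Lemma \ref{lemma1}.

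I expect the main obstacle to be this last bridge, not the singular-value estimate. In Propositions \ref{prop1} and \ref{prop2} the monitored functional ($\|\bfA\bfx-\bfb\|_2^2$, respectively $t(\bfx)$) is itself a fixed quadratic, so Lemma \ref{lemma1} applies to it directly; here the target $f$ is non-quadratic and its weights $\bfW_k$ change every iteration, so the argument only controls the iteration-dependent surrogate $q_k$. The care lies in identifying $q_k$ simultaneously as the tangent majorant anchored at $\bfx_{k-1}$ (so that $q_k\ge f$ and $q_k(\bfx_{k-1})=f(\bfx_{k-1})$ both hold) and as the exact functional whose projected, inexact version is the $\hat q_k$ the algorithm actually minimizes. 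Pinning down this identification, together with the two factorization identities $\bfA\bfx-\bfb=\bfU_{k+1}\bfr$ and $\bfW_k\bfL\bfx=\bfL^{LU}_k\bfU^{LU}_k\bfy$ that produce the one-sided bounds, is where the proof must be made rigorous.
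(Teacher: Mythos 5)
Your proof is correct and follows essentially the same route as the paper's: a term-by-term singular-value sandwich between the inexact functional $\hat q_k$ and the quadratic tangent majorant $q_k$ of $f$ at $\bfx_{k-1}$, an application of Lemma \ref{lemma1} with $h=q_k$ and $\hat h=\hat q_k$, and the majorization bridge $f(\bfx_k)\le q_k(\bfx_k)\le q_k(\bfx_{k-1})=f(\bfx_{k-1})$. The one substantive difference is in your favor: you express the regularization-term constants through $(\bfL^{LU}_k)^{\dagger}$, which is what the identity $\bfU^{LU}_k\bfy=(\bfL^{LU}_k)^{\dagger}\bfW_k\bfL\bfx$ (for $\bfx=\bfZ_k\bfy$, using $\bfW_k\bfL\bfZ_k=\bfL^{LU}_k\bfU^{LU}_k$) actually yields, whereas the paper's statement and proof write $\|(\bfU^{LU}_k)^{\dagger}\bfW_k\bfL\bfx\|_2^2$, which is dimensionally inconsistent when $\bfU^{LU}_k$ is the small $k\times k$ triangular factor defined through \eqref{eq:IRW_H_projection_approx}. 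In other words, you have silently corrected what appears to be a notational slip in the paper, and accordingly your $\bfC$ (block diagonal with $\bfU_{k+1}^{\dagger}$ and $(\bfL^{LU}_k)^{\dagger}$) is the correct matrix for the bound $\kappa(\bfC)^2-1$.
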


\begin{proof}
Consider the quadratic tangent majorant  $q_k(\bfx)$ of $f(\bfx)$ at $\bfx_{k-1},$ 
\begin{equation*}
   q_k(\bfx) = \| \bfA \bfx - \bfb \|_2^2 + \lambda \|\bfW_k \bfL \bfx \|_2^2 + c_k, \quad \bfW_k =\bfW(\bfL \bfx_{k-1}),
\end{equation*}
where $c_k$ is a constant that does not depend on the solution and $\lambda$ has absorbed possible multiplicative factors. At each iteration of IRW-FCMRH or IRW-FLSLU, one minimizes \eqref{eq:IRW_H_projection_approx}, which corresponds to minimizing 
\begin{equation*}
   \hat q_k(\bfx) = \| \bfU_{k+1}^{\dagger} (\bfA \bfx - \bfb) \|_2^2 + \lambda \|(\bfU_{k}^{LU})^{\dagger} \bfW_k \bfL \bfx \|_2^2 + c_k, \quad \bfW_k =\bfW(\bfL \bfx_{k-1}),
\end{equation*}
for $\bfU_{k+1}$ defined in  \eqref{eq:both_factorization} and $\bfU^{LU}_{k}$ defined in \eqref{eq:IRW_H_projection_approx}. Then,
\begin{eqnarray*}
   \hat q_k(\bfx) \leq \sigma_1^2(\bfU_{k+1}^{\dagger})\, \| (\bfA \bfx - \bfb) \|_2^2 + \lambda \,\sigma_1^2((\bfU_{k}^{LU})^{\dagger} )\, \|\bfW_k \bfL \bfx \|_2^2  + c_k  \leq \underbrace{\max(\sigma_1^2(\bfU_{k+1}^{\dagger}),\sigma_1^2((\bfU_{k}^{LU})^{\dagger}))}_{k_1 \geq 0 } \, q_k(\bfx), \\
 \hat q_k(\bfx) \geq \sigma_{k+1}^2(\bfU_{k+1}^{\dagger})\, \| (\bfA \bfx - \bfb) \|_2^2 + \lambda \sigma_{k}^2((\bfU_{k}^{LU})^{\dagger}) \|\bfW_k \bfL \bfx \|_2^2  + c_k  \geq \underbrace{\min(\sigma_{k+1}^2(\bfU_{k+1}^{\dagger}),\sigma_k^2((\bfU_{k}^{LU})^{\dagger}))}_{k_2 \geq 0 } \, q_k(\bfx).
\end{eqnarray*}
Using Lemma \ref{lemma1} with $h(\bfx) = q_{k}(\bfx) $ and $\hat h(\bfx) = \hat q_{k}(\bfx) $, we have that:
\[\frac{\Delta_q}{\hat q_k(\bfx_{k})} \geq \frac{k_1-k_2}{k_2}= \frac{\max(\sigma_1^2(\bfU_{k+1}^{\dagger}),\sigma_1^2((\bfU_{k}^{LU})^{\dagger}))}{\min(\sigma_{k+1}^2(\bfU_{k+1}^{\dagger}),\sigma_k^2((\bfU_{k}^{LU})^{\dagger}))}-1 \Rightarrow  q_k(\bfx_{k-1}) - q_k(\bfx_{k}) \geq 0.\]
If this condition is met, then a descent step in the original function is also guaranteed:
\[f(\bfx_{k}) \leq q_k(\bfx_{k}) \leq q_k(\bfx_{k-1}) = f(\bfx_{k-1}), 
\]
where the first inequality holds since $ q_k(\bfx)$ is a majorant of $f(\bfx)$ and the last equality holds given the definition of $ q_k(\bfx)$. 
\end{proof}

\begin{corollary} Given a sequence of solutions $\{\bfx_k\}_{k=1,..	}$ obtained with S\&S-IRW-FCMRH or S\&S-IRW-FLSLU, then, for $f(\bfx)$ in \eqref{eq:functional} with a fixed regularization parameter $\lambda$ ,
\[
\frac{\hat q_k(\bfx_{k-1})-\hat q_k(\bfx_{k})}{\hat q_k(\bfx_{k})} \geq \kappa(\bfC)^2-1 \Rightarrow f(\bfx_k) \leq f(\bfx_{k-1}) 
\]
where $\hat q_k(\bfx_{k-1})$ is the (sketched) functional \eqref{eq:H_sketched_projection_IRW} minimized at each iteration in the corresponding Krylov subspace, and where and $\bfC$ is a block diagonal matrix with $\bfS_{1}$ and $\bfS_{2}$ in its diagonal. Moreover, since at iteration $k$, $\bfx_{k} \in \mathcal{R}(\bfZ_k)$ and $\bfx_{k-1} \in \mathcal{R}(\bfZ_{k-1})\subset \mathcal{R}(\bfZ_k)$, a tighter bound can be similarly derived by considering the singular values of $\bfC$ restricted to the range of $[[(\bfA \bfZ_k)^T;(\bfW_k \bfL \bfZ_k)^T]^T,[\bfb^T;\bf0^T]^T]$. 
\end{corollary}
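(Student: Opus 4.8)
The plan is to mirror the proof of Proposition \ref{prop3}, replacing the quasi-minimal-residual factors $\bfU_{k+1}^{\dagger}$ and $(\bfU_k^{LU})^{\dagger}$ by the two sketching matrices $\bfS_1$ and $\bfS_2$. Fix an iteration $k$ and let $q_k(\bfx) = \|\bfA\bfx-\bfb\|_2^2 + \lambda\|\bfW_k\bfL\bfx\|_2^2 + c_k$ be the quadratic tangent majorant of $f$ at $\bfx_{k-1}$, with $\bfW_k = \bfW(\bfL\bfx_{k-1})$ and $c_k$ the solution-independent constant enforcing $q_k(\bfx_{k-1}) = f(\bfx_{k-1})$. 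Restricted to $\bfx\in\mathcal{R}(\bfZ_k)$, minimizing the sketched functional \eqref{eq:H_sketched_projection_IRW} is the same as minimizing $\hat q_k(\bfx) = \|\bfS_1(\bfA\bfx-\bfb)\|_2^2 + \lambda\|\bfS_2\bfW_k\bfL\bfx\|_2^2 + c_k$, and $\bfx_k$ is its minimizer over $\mathcal{R}(\bfZ_k)$. The goal is to sandwich $\hat q_k$ between two multiples of $q_k$ and then invoke Lemma \ref{lemma1}.

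The key step is to expose the block-diagonal structure that makes $\kappa(\bfC)$ appear. I would introduce the stacked residual $\bfw(\bfx) = [\,(\bfA\bfx-\bfb)^T,\ \sqrt{\lambda}\,(\bfW_k\bfL\bfx)^T\,]^T$, so that $q_k(\bfx)-c_k = \|\bfw(\bfx)\|_2^2$ and $\hat q_k(\bfx)-c_k = \|\bfC\bfw(\bfx)\|_2^2$ with $\bfC = \diag{\bfS_1,\bfS_2}$. The elementary bound $\sigma_{\min}^2(\bfC)\,\|\bfw(\bfx)\|_2^2 \leq \|\bfC\bfw(\bfx)\|_2^2 \leq \sigma_{\max}^2(\bfC)\,\|\bfw(\bfx)\|_2^2$ then holds, and because the singular values of a block-diagonal matrix are the union of those of its blocks, $\sigma_{\max}^2(\bfC) = \max(\sigma_{\max}^2(\bfS_1),\sigma_{\max}^2(\bfS_2))$ and $\sigma_{\min}^2(\bfC) = \min(\sigma_{\min}^2(\bfS_1),\sigma_{\min}^2(\bfS_2))$. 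Setting $k_1 = \sigma_{\max}^2(\bfC)$ and $k_2 = \sigma_{\min}^2(\bfC)$ gives $k_1/k_2 = \kappa(\bfC)^2$, exactly the quantity in the statement.

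The constant $c_k$ is common to $q_k$ and $\hat q_k$, so the two-sided bound on $\|\bfw\|_2^2$ extends to $k_2\, q_k(\bfx) \le \hat q_k(\bfx) \le k_1\, q_k(\bfx)$ as soon as $k_2 \le 1 \le k_1$; this is automatic in the intended regime, since a subspace embedding with distortion $\eps$ forces the relevant singular values of $\bfS_1$ and $\bfS_2$ into $[1-\eps,1+\eps]$, whence $k_2\le 1\le k_1$. With the sandwich in hand, Lemma \ref{lemma1} applied with $h = q_k$ and $\hat h = \hat q_k$ yields $q_k(\bfx_{k-1}) \ge q_k(\bfx_k)$ under the stated ratio condition (the hypothesis $\bfx_{k-1}\in\mathcal{R}(\bfZ_{k-1})\subset\mathcal{R}(\bfZ_k)$ guarantees $\Delta_q \ge 0$). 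Monotonicity of $f$ then follows from the majorant chain $f(\bfx_k) \le q_k(\bfx_k) \le q_k(\bfx_{k-1}) = f(\bfx_{k-1})$, exactly as in Proposition \ref{prop3}. The tighter variant follows by replacing $\bfC$ with its restriction to the range of the stacked operator $[[(\bfA \bfZ_k)^T;(\bfW_k \bfL \bfZ_k)^T]^T,[\bfb^T;\bf0^T]^T]$, which can only shrink the singular-value spread and hence $\kappa(\bfC)$.

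The main obstacle is not the algebra, which transcribes directly from Proposition \ref{prop3}, but the justification of the constant-term handling together with the iteration-dependence of the embedding. Because both $\bfZ_k$ and the weights $\bfW_k$ change at every step, the subspace on which $\bfC$ must act as a near-isometry also changes with $k$; the clean inequality $k_2 \le 1 \le k_1$ that lets $c_k$ be absorbed is only guaranteed when $\bfS_1$ and $\bfS_2$ embed the \emph{current} stacked range, which is precisely what the "restricted to the range" refinement supplies. I therefore expect the delicate point to be pinning down the relevant singular values of $\bfC$ on the correct, iteration-dependent subspace rather than on the full matrices.
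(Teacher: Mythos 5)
Your proposal is correct and follows exactly the route the paper intends: its proof of this corollary is literally ``readily adapted from the proof of Proposition~\ref{prop3}'', and your stacked-residual formulation with $\bfC = \diag{\bfS_1,\bfS_2}$, the sandwich via Lemma~\ref{lemma1}, and the majorant chain $f(\bfx_k)\le q_k(\bfx_k)\le q_k(\bfx_{k-1})=f(\bfx_{k-1})$ is precisely that adaptation. If anything, you are more careful than the paper: you explicitly flag that absorbing the constant $c_k$ requires $k_2\le 1\le k_1$, and that since the fat sketching matrices have nontrivial null spaces, the lower bound (and hence a finite $\kappa(\bfC)$) is only meaningful on the iteration-dependent stacked range --- points the paper's one-line proof and its statement of $\kappa(\bfC)$ leave implicit.
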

\begin{proof}
The proof can be readily adapted from the proof of Preposition \ref{prop3}. Similar bounds can also be found in \cite{Sabate2025rand}.
\end{proof}
It is important to recall that flexible and sketch-and-solve methods (FCMRH, FLSLU, S2S-FCMRH, S2S-FLSLU), as well as their hybrid counterparts  ( H-FCMRH, H-FLSLU, S2S-H-FCMRH, S2S-H-FLSLU) are iterative regularization methods. This means that (some of) their regularization properties are associated to the qualities of the solution subspace, and therefore they only produce regularized solutions with the desired properties when equipped with early stopping, see e.g., \cite{Hansen2010}. 

On the other hand, iteratively reweighted (IRW-) methods have the potential to converge to the solution of the original regularized functional. In this case, the take home message is that, if a good estimation of the sufficient conditions was computable, or ideally one associated to even tighter bounds, one could use this information to trigger a restart in the inner-product free methods, or increase the size of the sketch in the sketch-and-solve methods. We leave this for future work.

\section{Numerical considerations}\label{sec55}
Up to this section, this work considers a general variational regularization term as presented in~\eqref{eq:functional}, where the specific definition of the weights depends on the variation regularization in \eqref{eq:var} that we are interested in. In this section, we recall the most commonly used regularization terms, the corresponding definition of the weights, and numerical considerations as they are implemented in the numerical experiments. As mentioned earlier, if the original regularization term is not differentiable at vectors with components equal to zero, we introduce a smoothing parameter directly in the definition of the weights. In this section, we also present a short summary of different regularization parameter choice criteria that we considered in the numerical experiments.

Note that further considerations, such as restarts, can be incorporated into this framework, see, e.g. \cite{Gazzola2021inexact,onisk2025restarts}, but 
are not included in this work.
\subsection{Sparsity promoting regularization}
Consider $\ell_p$ regularization, i.e. $R(\bfx)=\sfrac{1}{p}\|\bfL\|_p^p$ in \eqref{eq:var}, where $\bfL$ is an invertible change of variables. It is known that this regularization term, for $0<p\leq 1$, promotes sparse solutions \cite{Fornasier2010CS}. Note that, since $\mathcal{N}(\bfL)=\{\bf0\}$, we can guarantee the the solution is unique. In this case, we define the coefficients of the diagonal weight matrix in \eqref{eq:functional} as
\begin{equation}\label{eq:w_l1}
\bfW^{p, \tau}(\bfL \bfx)= \diag{(\bfz(1)^{2} + \tau^2)^{\frac{p-2}{4}},\dots, (\bfz(n)^{2}+ \tau^2)^{\frac{p-2}{4}}},
\end{equation}
for $\bfz = \bfL \bfx$, and where $\tau$ is the smoothing parameter that alleviates the lack of differentiability in vectors with $0$ coefficients. Note that the most commonly used choice is $p=1$, where $\ell_1$ regularization, also named LASSO in the statistics community, penalizes the absolute value of the coefficients. This is also the convex relaxation of the $\ell_p$ norm for any $p<1$. Recall that, at each iteration $k$ of a flexible methods, the weights in \eqref{eq:w_l1} are evaluated at the solution given at the previous iteration, i.e., $\bfW_k=\bfW^{p, \tau}(\bfL \bfx_{k-1})$. The iteration-dependent preconditioning, $(\bfW_k \bfL)^{-1}$, is well defined thanks to the smoothing parameter $\tau$, and it is easy to compute since $\bfW_k$ is a diagonal matrix and $\bfL$ is an invertible change of variables. 

\subsection{Total variation regularization}
Total Variation (TV) regularization is known to promote piece-wise constant solutions by encouraging sparsity in the discrete gradient of the solution \cite{rudin1992nonlinear}, and plays a fundamental role in correctly reconstructing edges in signal and image processing. The specific definition of the weights in \eqref{eq:functional} depends on the choice of the discrete approximation of a
differential operator, which depends on the dimension of the problem. 

For a 1D signal $\bfx\in\mathbb{R}^{n}$, we consider the following 1D discrete approximation of a
differential operator
\begin{equation}\label{1DD}
\bfD_1 = \begin{pmatrix}
1 & -1  &\\[-6pt]   %
&\hspace{-9pt}\fddots&\hspace{-9pt}\fddots \\[-6pt]   %
 & 1 & -1
\end{pmatrix} \in \mathbb{R}^{(n-1) \times n},
\end{equation}
so that the weights in \eqref{eq:functional} are defined as in \eqref{eq:w_l1} for $\bfL=\bfD_1$. In this case, $\bfW_k \bfL~\in~\mathbb{R}^{(n-1)\times n}$ is obviously not invertible, and its null space is that of constant vectors. Note that, alternatively, one can fix certain boundary conditions to avoid having to deal with a null-space. This is very common, for example, in CT applications, where 0 boundary conditions are a reasonable assumption, but we are not discussing this further in this work.  

Flexible methods for problems with non-invertible regularization matrices $\bfL$ require computing the A-weighted pseudoinverse as defined in \eqref{eq:Apseudo}, and this is needed either when solving the minimization problem in \eqref{eq:MM_w_std} or in \eqref{eq_F_square}. For $\bfL=\bfD_1$, the nullspace of $\bfW_k\bfL$ has only one vector, so the cost of computing the A-weighted pseudoinvese is dominated by that of computing the pseudoinverse, as defined in \eqref{eq:Apseudo}, and a matrix vector product with $\bfA$. Even if the latter, can sometimes be avoided, see \cite{Sabate2019TV}, the computation of $(\bfW_k\bfL)^{\dagger}$ can still be very expensive (this could be computed, for example, using an inner solve). Therefore, we sometimes consider the following approximation instead:
\begin{equation}\label{Apseudo_approx}
(\bfW_k\bfL)^{\dagger}_{\bfA} \approx \bfE \bfL^{\dagger} \bfW_k^{-1},
\end{equation}
where $\bfE$ is defined in \eqref{eq:Apseudo}. Using a flexible Krylov solver on \eqref{eq_F_square} also requires efficient matrix-vector products with $\bfP$, defined in \eqref{eq_F_square}. This can be done very efficiently by precomputing $\bfA\bfK$ ahead of the iterations. Note that flexible methods for \eqref{eq_F_square} promote sparsity through the preconditioning in the basis vectors. However, there isn't a clear interpretation of the regularization term in this case, so we always consider this problem without explicit regularization. 

Similar techniques can be used when considering total variation regularization for 2D or 3D, where more choice exists in the way vertical, horizontal and possibly depth derivatives are handled (i.e. isotropic or anisotropic TV), see e.g. \cite{Gazzola2021Edge, Gholami2024TV}. In this case, the computational gains from using the approximation \eqref{Apseudo_approx} are much stronger, given the structured nature of the discrete approximations of 2D and 3D differential operators.

Note that some inner-products might be performed in these solvers. First, computing $\bfx_A$ requires an inner-product before the computations start; then an inner-product is needed to do matrix-vector products with $\bfP$ and $\bfE$ in \eqref{eq_F_square} and  \eqref{eq:Apseudo}. However, both concern inner-products with a constant vector, and there is room for changing the normalization constant in case of overflow. Moreover, there is still no inner-products associated to the construction of the basis vectors.

\subsection{Regularization parameter choices}
One of the main advantages of working with projection methods is that a good regularization parameter can be chosen at each iteration for the small projected problem. This is at the core of hybrid regularization, see survey \cite{Chung2024survey}, where a (standard form) Tikhonov regularizer is added to the projected problem, but can also be extended to other methods that incorporate a (projected) regularization matrix, see e.g. \cite{buccini2022comparison, Gazzola2021IRW}.
Consider the general projected system, where the solution is considered as a function of the regularization parameter:
\begin{equation}\label{Asolve}
\bfy_k(\lambda) = \argmin \left\{ \|\bfA_k\bfy-\bfb_k\|_2^2+\lambda\|\bfL_k\bfy\|_2^2 \right\} = (\bfA_k^{T}\bfA_k+\lambda^2 \bfL_k^{T}\bfL_k)^{-1}\bfA_k^{T}\bfb_k =\bfA_{k,\lambda}^{\dagger} \bfb_k,
\end{equation} 
and where the dimensions of $\bfA_k$ and $\bfL_k$  are much smaller than those of $\bfA$ and $\bfL$. In this work, this might refer to the projected problem in any of the new flexible inner-product free methods, i.e., \eqref{eq:H_projection_approx},\eqref{eq:hybrid_H_projection_approx} or  \eqref{eq:IRW_H_projection_approx}; or their sketch-and-solve counterparts, i.e.,  \eqref{eq:H_sketched_projection},\eqref{eq:H_sketched_projection_hybrid} or  \eqref{eq:H_sketched_projection_IRW}.

In many applications it is reasonable to assume that an accurate estimation of the noise level is known, 
\begin{equation}\label{eq:noise_level}
\tau_\text{nl} = \|\bfe\|_2 / \|\bfA \bfx_{\rm true}\|_2 \approx  \|\bfe\|_2 / \|\bfb\|_2,    
\end{equation} 
where $\bfe$ is the noise in the right hand side. 

In this work we consider two different parameter choice rules. First, the discrepancy principle, where at each iteration, we compute 
\[
\lambda_k = \argmin_{\lambda} \left\{\|\bfA_k\bfy_k-\bfb_k\|_2^2 / \|\bfb\|_2^2 -\tau_\text{nl} \right\}.
\] 
Second, the weighted generalized cross validation (WGCV), where at each iteration, we compute 
\begin{equation}\label{eq:wgcv}
\lambda_k = \argmin_{\lambda} \frac{k\|\bfA_k \bfy_k(\lambda)- \bfb_k\|_2^2}{\trace{\bfI-\omega_k \bfA_k \bfA_{k,\lambda}^{\dagger} }^2} = \argmin_{\lambda} \frac{k\|(\bfA_k \bfA_{k,\lambda}^{\dagger}- \bfI) \bfb_k\|_2^2}{\trace{\bfI-\omega_k \bfA_k \bfA_{k,\lambda}^{\dagger} }^2},
\end{equation}
where $\bfA_{k,\lambda}^{\dagger}$ is defined in \eqref{Asolve}.
For different solvers one can use different weights $\omega_k$ in \eqref{eq:wgcv}. In particular, taking $\omega_k=1$ $\forall k$, corresponds to the GCV method. 

\section{Numerical experiments}\label{sec:numerics}
In this section, we show four experiments providing comparisons between the new methods and other standard solvers.

To compare the quality of the regularization parameter choices and study the potential of the solvers independently of these  criteria, we show the solutions for the optimal regularization parameter, computed at each iteration as:
\begin{equation} \label{para:opt}
\lambda_k=\argmin_{\lambda}\|\bfx_k(\lambda)-\bfx_{\text{true}}\|,
\end{equation}
where $\bfx_k(\lambda)$ is the solution at iteration $k$ for a given parameter $\lambda$.

For sketched-and-solve solvers, the sketching matrices $\bfS_1$ and $\bfS_2$ in \eqref{eq:H_sketched_projection_IRW}, are subsampling matrices, based on the approximation of leverage scores, as explained with more detail in \cite{Sabate2025rand}. Note that more work might be needed to find sketching techniques that are suitable for each application (e.g. that minimize communication).

The new flexible Krylov methods proposed in this work are summarized in Table \ref{tab:methods}, while other Krylov solvers are mentioned in Table \ref{tab:methods2} with their corresponding citations. Moreover, we also compare the new methods with the commonly used SpaRSA \cite{Wright2008sparsa} and FISTA \cite{beck2009fista}.

\begingroup
\renewcommand{\arraystretch}{1} 
\setlength{\tabcolsep}{4pt} 
\small
\begin{table}[h]
\caption{Summary of all the new flexible methods. Each column corresponds to the regularization term in the functional \eqref{eq:var} that decreases at each iteration, given conditions on the inexactness of the methods given in Section \ref{sec4}. In parenthesis, next to the solvers' names, the reference to the projected problem that is solved at each iteration. The left label corresponds to the method used to construct the basis for the solution subspace.}
\label{tab:methods}
\begin{tabular}{cccc}
& \bf $R(\bfx) =0$ & \bf $R(\bfx)=\|\bfx\|_2^2$  &  $R(\bfx)=\|W(\bfL\bfx)\bfL\bfx\|_2^2$   \\ \cline{2-4} 
\multicolumn{1}{l|}{\multirow{2}{*}{\bf{Flex. Hess.} \eqref{eq:Hessenberg_factorization}}} & FCMRH \eqref{eq:H_projection_approx}  & H-FCMRH \eqref{eq:hybrid_H_projection_approx}& IRW-FCMRH \eqref{eq:IRW_H_projection_approx} \\
\multicolumn{1}{l|}{ } &  S\&S-FCMRH \eqref{eq:H_sketched_projection} & S\&S-H-FCMRH \eqref{eq:H_sketched_projection_hybrid} &  S\&S-IRW-FCMRH \eqref{eq:H_sketched_projection_IRW} \\
\multicolumn{1}{l|}{\multirow{2}{*}{\bf{Flex. Gen. Hess.} \eqref{eq:Gen_Hessenberg_factorization}}} & FLSLU \eqref{eq:H_projection_approx} & H-FLSLU \eqref{eq:hybrid_H_projection_approx}& IRW-FLSLU  \eqref{eq:IRW_H_projection_approx} \\
\multicolumn{1}{l|}{ } &  S\&S-FLSLU  \eqref{eq:H_sketched_projection} & S\&S-H-FLSLU \eqref{eq:H_sketched_projection_hybrid} &  S\&S-IRW-FLSLU \eqref{eq:H_sketched_projection_IRW}
\end{tabular}
\end{table}
\normalsize
\endgroup

\begingroup
\renewcommand{\arraystretch}{1} 
\setlength{\tabcolsep}{4pt} 
\small
\begin{table}[h]
\caption{Summary of all the compared Krylov solvers. Each column corresponds to the regularization term in the functional \eqref{eq:var} that decreases at each iteration. Next to the solvers' names, one can find the corresponding references (either seminal or review papers). The left label corresponds to the method used to construct the basis for the solution subspace.}
\label{tab:methods2}
\begin{tabular}{cccc}
& \bf $R(\bfx) =0$ & \bf $R(\bfx)=\|\bfx\|_2^2$  &  $R(\bfx)=\|W(\bfL\bfx)\bfL\bfx\|_2^2$   \\ \cline{2-4} 
\multicolumn{1}{l|}{\bf Hessenberg } & CMRH \cite{sadok1999new} & H-CMRH \cite{brown2024hcmrh} &  \\
\multicolumn{1}{l|}{\bf{Arnoldi}} & GMRES \cite{Saad1886gmres} & H-GMRES \cite{Chung2024survey} &  \\
\multicolumn{1}{l|}{\bf{Flexible Arnoldi}} & FGMRES \cite{Gazzola2014GAT} & H-FGMRES \cite{Chung2024survey} & IRW-FGMRES \cite{Gazzola2021IRW} \\
\multicolumn{1}{l|}{\bf Golub Kahan Bidiag. (GKB)} & LSQR \cite{Saunders1882lsqr}  & H-LSQR \cite{Chung2024survey} & \\
\multicolumn{1}{l|}{\bf Flexible GKB} & FLSQR \cite{Chung2019lp} & H-FLSQR \cite{Chung2019lp} & IRW-FLSQR \cite{Gazzola2021IRW}
\end{tabular}
\end{table}
\normalsize
\endgroup

The computations were performed in MATLAB R2023a, and many solvers and test problems can be found in IR Tools \cite{IRtools} and AIR Tools II \cite{hansen2018air}. The low precision simulations are performed with the {\tt chop} function in MATLAB, which can be found in  \cite{higham2019simulating}.

\subsection{Deblurring stars problem} 
This deblurring example corresponds to the observation of stars under simulated atmospheric blur, and it is taken from \cite{RestoreTools,Nagy1998degraded}. Note that this is a non-homogeneous blur, but matrix-vector products with the system matrix can be efficiently performed. The true solution of the system, used to generate the problem, as well as the noisy measurements including additive Gaussian white noise with noise level $0.01$, can be found in Figure \ref{fig3}; where one can observe that the solution is qualitatively sparse. For this reason, we compare methods either including or based on $\ell_1$ regularization.

\begin{figure}[h]
\center
\includegraphics[width=\textwidth]{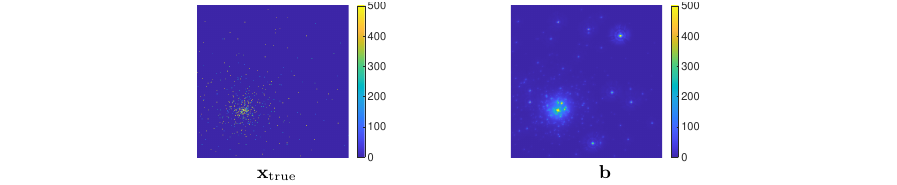}
\caption{Example 2. True solution modeling a starry night and noisy measurements simulating  atmospheric blur. \label{fig3}}
\end{figure}

To compare the performance of the new methods with respect to other standard solvers we present the relative error norms for different  regularizarion parameter choices.

In Figure \ref{fig4} (left), we show the results without explicit regularization. One can observe a small amount of semiconvergence, with the lowest relative error norms being achieved by the flexible methods. Figure \ref{fig4} (right) shows the results for a fixed regularization parameter that is good for the full dimensional problem, so that we can compare with standard solvers that require this parameter to be chosen ahead of the iterations. Note that all Krylov-based solvers achieve a similar relative error norm in significantly less iterations than FISTA and SpaRSA.

\begin{figure}[h]
\center
\includegraphics[width=\textwidth]{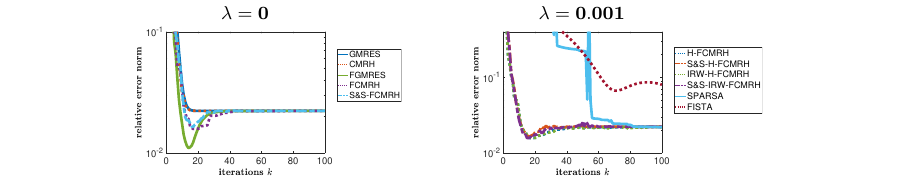}
\caption{Example 2. Relative error norms for different methods without explicit regularization (left) and for a fixed regularization parameter that is good for the full dimensional problem (right).\label{fig4}}
\end{figure}

In Figure \ref{fig5} we show the relative error norms if we allow the regularization parameter to change at each iteration. For each row of Figure \ref{fig5}, we compare the same methods  using different regularization parameter choices (note that a common legend is used on the whole row). Here, the new inner-product free solvers are displayed with a dotted line, the sketched-and-solve solvers with a dashed line, and the standard counterparts with a solid line. Recall that the GCV parameter choice corresponds to taking $\omega_k=1$ $\forall k$ in the definition of the WGCV \eqref{eq:wgcv}. Note that both FISTA and SpaRSA require a regularization parameter to be chosen before the iterations, so we use the one computed by H-FGMRES with the discrepancy principle at the end of the iterations.

Using the optimal regularization parameter choice as defined in \eqref{para:opt}, one can observe that the flexible methods are better than their standard counterparts, with the iteratively reweighted methods (IRW-) being better than the hybrid solvers (H-). Moreover, all Krylov solvers are significantly faster than other standard algorithms (FISTA and SpaRSA). Although sketched-and-solve solvers do not significantly improve the quality of the reconstructions with respect to the standard inner-product free methods when using the optimal regularization parameter, these become more relevant when choosing the regularization parameter using standard regularization parameter choices (discrepancy principle and GCV). This is because they approximate better the value of the residual norm at each iteration.

\begin{figure}[h]
\center
\includegraphics[width=\textwidth]{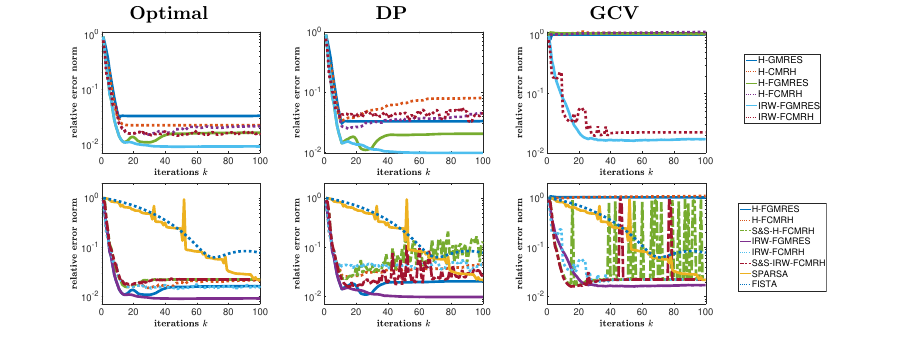}
\caption{Example 1. Relative error norms for different regularization parameter choices (each column), and for different methods. Note that both FISTA and SpaRSA require a regularization parameter to be chosen before the iterations, so we use the one computed by H-FGMRES with the discrepancy principle at the end of the iterations. Note that the labels are only displayed once for each row. \label{fig5}}
\end{figure}


\subsection{Computed tomography (CT) problem}
This example concerns a simulated computed tomography problem, where a sparse image is measured with a parallel beam 2D CT with with 362 rays and 18 uniformly distributed angles in the range $[1^\circ, 180^\circ]$, and Gaussian noise of noise level 0.1 is added to the measurements. The true solution, as well as the noisy measurements (a.k.a. sinogram) can be observed in Figure \ref{fig1}. Both the image and the measurements are generated using the IR Tools toolbox \cite{IRtools}. Since the solution in Figure \ref{fig1} is relatively sparse, we compare methods either including or based on $\ell_1$ regularization.

\begin{figure}[h]
\center
\includegraphics[width=\textwidth]{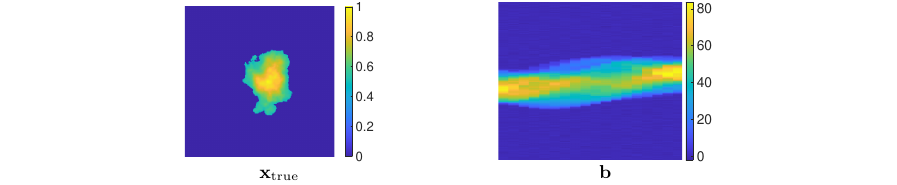}
\caption{Example 2. True solution and noisy measurements, also known as sinogram.\label{fig1}}
\end{figure}

In Figure \ref{fig2}, the relative error norm histories for different methods that compute the regularization parameter at each iteration are displayed. As specified in the title, each subfigure corresponds to a different regularization parameter choice, where the weights for the WGCV depend on the chosen method. In particular, $\omega_k=(k+1)/m$ as suggested in \cite{Renaut2017wgcv2} for LSLU-based methods, and $\omega_k$ is chosen as suggested in \cite{Chung2008wgcv} for LSQR-based methods –these have been chosen empirically to give the best results–.

Note that, without explicit regularization (first panel in Figure \ref{fig2}), the solvers based on the (possible flexible) generalized Hessenberg methods (LSLU, FLSLU) exhibit less semiconvergence than the standard counterparts (LSQR, FLSQR). This very favorable behaviour, however, could be attributed to the generalized Hessenberg methods adding basis vectors in the subspace that include less information with respect to  standard Krylov methods. Comparing the other regularization parameter choices, (panels 2-4 in Figure \ref{fig2}), one can observe that both the discrepancy principle and the weighted GCV do a good job at finding good regularization parameters (compared to the optimal parameter computed as in \eqref{para:opt}). It might be worth noting that the choice of the regularization matrix in the generalized Hessenberg methods does not seem to make any difference in terms of the relative error norm. Recall that, in the projected problem at each iteration, H-FLSLU considers standard Tikhonov while IRW-FLSLU considers the $U_{k}^{LU}$ matrix of an LU factorization of a reweighted term as in \eqref{eq:IRW_H_projection_approx}. This might indicate that this term is not a good approximation to the projected reweighted term.

Summarizing the results in Figure \ref{fig2}, all methods perform similarly, but the solvers based on the generalized Hessenberg method are inner-product free, and therefore have advantages with respect to the standard methods.
\begin{figure}[h]
\center
\includegraphics[width=\textwidth]{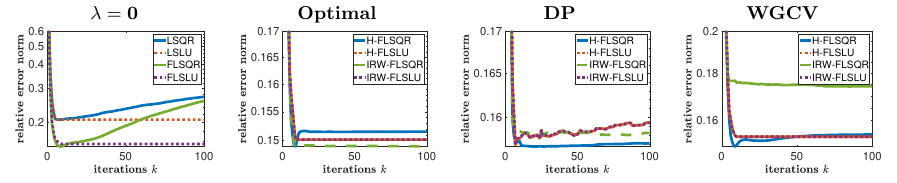}
\caption{Example 2. Relative error norms for different methods based on (flexible) GK and (flexible) generalized Hessenberg. \label{fig2}}
\end{figure}


\subsection{Example on low precision arithmetic}
In this example, a one-dimensional deblurring problem is solved in (simulated) low precision arithmetic. Note that the low precision simulations are done using the {\tt chop} function in MATLAB, see \cite{higham2019simulating}. The problem, also known as Spectra, features a sparse true signal simulating a simplified x-ray spectrum, and the system matrix corresponds to Gaussian blur. For more information, see \cite{Trussell1983Convergence}, and \cite{brown2024hcmrh} for a low precision version. In this example, we compare flexible methods where the weights correspond to those of $\ell_1$ regularization.

\begin{figure}[h]
\center
\includegraphics[width=\textwidth]{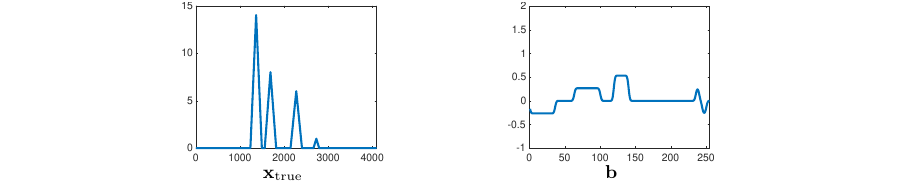}
\caption{Example 3. True solution and  measurements.\label{fig6}}
\end{figure}

Panel 1 and 3 of Figure \ref{fig7} show the relative error norms for different solvers for two different precisions: single precision, a.k.a. fp16, with 5 exponent and 10 significand bits, and q43, which has 4 exponent and 3 significand bits \cite{higham2019simulating}. Note that, for higher precision (panel 1 of Figure \ref{fig7}), traditional and inner-product free methods perform similarly, and in both cases the quality of the reconstructions improves when using flexible preconditioning. However, for lower precision (panel 3 of Figure \ref{fig7}), both traditional methods (GMRES and FGMRES) suffer from overflow in the norm computations in the first iteration, while CMRH and FCMRH produce solutions with decaying relative error norms. On panels 2 and 4 of Figure \ref{fig7}, one can observe the best  reconstructions in terms of error norm, produced by the solvers based on the Hessenberg method. One can see that, for both precisions, the flexible method produces a less oscillatory solution, with a better identification of the peaks. Since this experiment is done using simulated low precision, which requires modifying each floating point operation, only solvers without explicit regularization are compared.

\begin{figure}[h]
\center
\includegraphics[width=\textwidth]{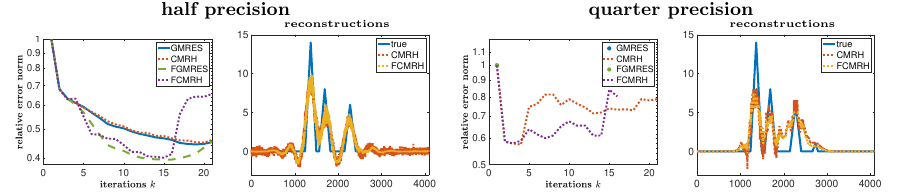}
\caption{Example 3. Experiments using simulated half and quarter precision arithmetic. Panel 1 and 3 display the relative error norms and panel 2 and 4 show the best reconstructions in terms of error norm, produced by the solvers based on the Hessenberg method. \label{fig7}}
\end{figure}


\subsection{Signal deblurring problem with total variation regularization}
This one dimensional example features an exact piece-wise signal $\bfx_{\text{true}}\in\mathbb{R}^{256}$ and a blurred noisy right-hand-side including white Gaussian noise with noise level 0.01, as can be observed in Figure~\ref{fig8}. Given the nature of this problem, we compare methods either including or based on TV regularization.

\begin{figure}[h]
\center
\includegraphics[width=\textwidth]{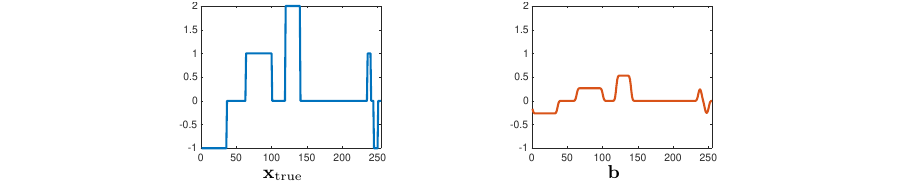}
\caption{Example 4. True solution and noisy measurements.\label{fig8}}
\end{figure}

The relative error norms for different methods can be observed in Figure~\ref{fig9}, both using the `exact pseudoinverse' of $\bfW_k \bfL$ in the computation of the A-weighted pseudoinverse in \eqref{eq_F_square} (computed using MATLAB's backslash), and the alternative approximation described in \eqref{Apseudo_approx}. Note that the results for both are similar. In this example, GMRES-D stands for a GMRES with a smoothing regularization term, namely a Tikhonov regularization term with a regularization matrix being the 1D discrete approximation of a differential operator in \eqref{1DD}. Comparing all methods, one can easily see that flexible methods outperform their non-flexible counterparts, with the inner-product free methods achieving almost the same reconstruction quality than their standard counterparts.

\begin{figure}[h]
\center
\includegraphics[width=\textwidth]{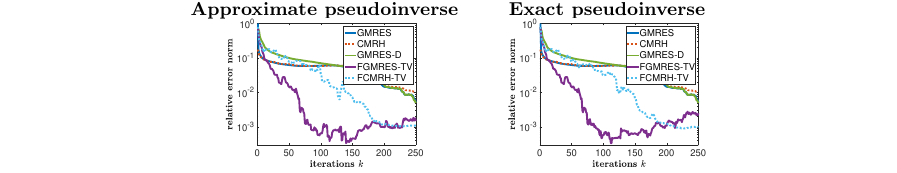}
\caption{Relative error norms for different methods. For the flexible methods, the iteration-dependent preconditioners require computing the pseudoinverse of $\bfW_k \bfL$. This is done `exactly' using MATLAB's backslash (left), and using the alternative approximation in \eqref{Apseudo_approx} (right). \label{fig9}}
\end{figure}

In Figure~\ref{fig10}, we show the best reconstructions for the different compared methods. Here, the flexible methods have been chosen to use the approximation in \eqref{Apseudo_approx}. To illustrate part of the success of flexible methods in this case,  in Figure~\ref{fig11} we further display a selection of basis vectors for the solution subspace used by the same solvers. One can easily see that flexible methods use a subspace for the solution that is constructed with basis vectors that are qualitatively  much more piece-wise constant. This helps mitigating the ringing effect that might appear in the reconstructions near the edges of the signal. 

\begin{figure}[h]
\center
\includegraphics[width=\textwidth]{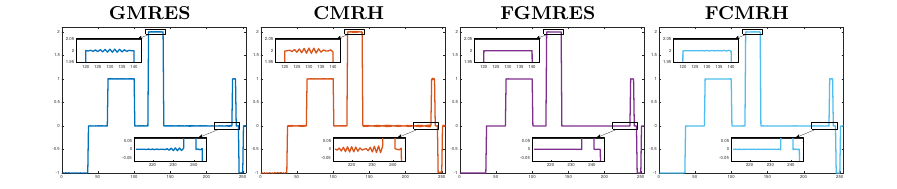}
\caption{Example 4. (Best) reconstructions obtained using different solvers.\label{fig10}}
\end{figure}

\begin{figure}[h]
\center
\includegraphics[width=\textwidth]{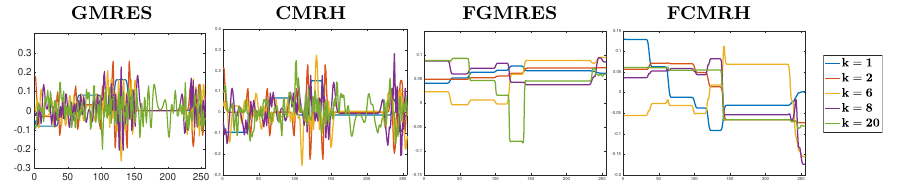}
\caption{Example 4. Selection of basis vectors for the solution subspace used by the different projection methods.\label{fig11}}
\end{figure}

\section{Conclusions and future work}\label{sec:conclusions}

This paper introduces new flexible and inner-product free Krylov methods with iteration-dependent preconditioning, as well sketch-and-project variants. This is presented in the framework of linear discrete inverse problems, where we approximate solutions of  least-squares problem with a general variational term, for example $\ell_1$ or total variation. The proposed solvers are flexible projection methods, and the key of their success is that the iteration-dependent preconditioning encodes prior information about the solution, that is related to the choice of the regularizer. Therefore, the solution subspaces are tailored to the solution. 
 
The novelty in this work corresponds to exploiting the idea behind flexible   Krylov solvers, without using inner-products in the core of the algorithm. Therefore, we use flexible CMRH (FCMRH) in this context for the first time and we introduce a completely new flexible LSLU (FLSLU).  Moreover, we propose new variants by modifying the regularization term in the projected problems, namely hybrid (H-FCMRH and H-FLSLU) and iteratively-reweighted methods (IRW-FCMRH and IRW-FLSLU) which determines the convergence of the methods without early stopping.  Last, we present sketch-and-solve variants, namely (S\&S-FCMRH, S\&S-FLSLU, S\&S-H-FCMRH, S\&S-H-FLSLU, S\&S-IRW-FCMRH and S\&S-IRW-FLSLU).
 
This work presents several numerical examples, where the flexible inner-product free variants show to be competitive with respect to their standard flexible counterparts. 
Moreover, some problems highlight  different aspects of the new solvers.
For example, we show a low precision setting, where it can be observed that flexible inner-product free Krylov solvers are less prone to problems associated to computations in low precision, often appearing when computing inner-products, while delivering comparable results to their standard counterparts. We also show that the effect of the variable preconditioning in the basis vectors for the solution is similar for the flexible inner-product free and the flexible standard methods.

On the other hand, sketch-and-solve methods can be particularly useful to  find good regularization parameters using standard regularization parameter choice criteria, since they do a better job at approximating the residual norm as well as the value of the regularization term. New  regularization parameter choice criteria that are tailored for inner-product free methods are still missing.

Last, none of the presented methods has convergence guarantees given the inexact nature of their optimality conditions. However, we can say something about the monotonicity properties of the minimized functionals under some conditions, and future work could include restarting (for flexible inner-product free methods) or increasing the size of the sketch (for sketch-and-solve methods), to address this. Last, an alternative way of using sketching in this context, which would provide convergence guarantees for the price of including inner-products (in practice, for a smaller dimension), is the sketch-and-precondition framework. This could be used to solve the tall and skinny problems appearing when restricting the solution to belong the the subspaces generate using either the flexible Hessenberg of the flexible generalized Hessenberg methods, but this is left for future work.

\bibliographystyle{abbrv}
\bibliography{biblio}

\end{document}